\documentclass[preprint,11pt]{amsart}
 \usepackage{amssymb}
\usepackage{amsmath,amsfonts,amscd}
\usepackage{mathrsfs,hyperref, enumerate}

\usepackage[a4paper,bindingoffset=0.2in,left=1in,right=1in,top=1in,bottom=1in,footskip=.25in]{geometry}

\usepackage{amsmath,amsfonts,amsthm,amssymb,amscd}
\usepackage{bbm}
\usepackage{mathrsfs}
\usepackage{color}
\usepackage{hyperref}
\usepackage{tikz-cd}
\usepackage{braket}

\usetikzlibrary{matrix}
\usepackage{graphicx}
\usepackage{enumerate}
\usepackage{changepage}

\def\A{\mathbb{A}}

\def\F{\mathbb{F}}
\def\P{\mathbb{P}}

\newcommand{\CalC}{\mathcal{C}}
\newcommand{\CalF}{\mathcal{F}}

\theoremstyle{definition}
\newtheorem{thm}{Theorem}
\numberwithin{thm}{section}
\newtheorem{cor}[thm]{Corollary}
\newtheorem{prop}[thm]{Proposition}
\newtheorem{lem}[thm]{Lemma}

\newcommand{\gam}{\gamma}

\newcommand{\Hom}{\operatorname{Hom}}
\newcommand{\End}{\operatorname{End}}
\newcommand{\Ker}{\operatorname{Ker}}

\newcommand{\eps}{\epsilon}
\newcommand{\Tr}{\operatorname{Tr}}
\newcommand{\abs}[1]{\left| #1 \right|}
\def\sl{\mathfrak{sl}}

\newcommand{\ip}[1]{\langle #1 \rangle}
\newcommand{\tit}[1]{\textit{#1}}

\newcommand{\trm}[1]{\textrm{#1}}
\newcommand{\al}{\alpha}

\def\F{\mathbb{F}}

\newcommand{\lam}{\lambda}

\newcommand{\tr}{\textrm{Tr}}

\newcommand{\aff}{\textrm{aff}}
\newcommand{\adj}{\textrm{adj}}
\newcommand{\GL}{\textrm{GL}}
\newcommand{\sub}{\subset}
\newcommand{\id}{\mathrm{I}}
\newcommand{\ol}[1]{\overline{#1}}
\begin{document}

\title{Additive Decompositions by Conjugacy Classes in $M_n(\mathbb{F}_q)$}

\begin{abstract}
Let $n \geq 2$ be a positive integer, and $q$ be a prime power.  We study the \tit{number} of additive decompositions of nonscalar matrices in the matrix ring $M_n(\F_q)$ as sums of elements from two prescribed conjugacy classes. Let $z \in M_n(\F_q)$ be nonscalar. We show that, except for the case $(n,q,\tr(z)) = (2,2,1)$, there exist conjugacy classes $X, Y  \sub M_n(\F_q)$ such that the characteristic polynomial of $X$ is irreducible of degree $n$ and the characteristic polynomial of $Y$ is of the form $ (T- \lambda) h(T)$, where $\lambda \in \F_q$, $h$ is irreducible of degree $n-1$ and $h(\lambda) \neq 0$. These classes can be chosen so that $\tr(z) = \tr(X) + \tr(Y)$. For such $X$ and $Y$, let 
$$
N_{X,Y}(z) = \# \Set{ (x,y) \in X \times Y : x + y = z}.
$$
We prove the following estimate:
$$
\left| N_{X,Y}(z) - q^{(n-1)^2}  \right| \leq 42 q^{(n-1)^2 -1}.
$$
Thus, for nonscalar matrices with matching trace, the number of such additive decompositions is \tit{approximately} the same, namely $q^{(n-1)^2}$,  with an absolute error constant independent of $n$ and $q$.
\end{abstract}

\author{Krishna Kishore and Sunil Kumar Mallick}

\email{kishorekrishna@iittp.ac.in}
\address{Krishna Kishore,
Department of Mathematics and Statistics,
Indian Institute of Technology Tirupati,
Tirupati, Andhra Pradesh
India, 517619.}
\keywords{Additive decompositions, Waring problem}

\email{mallicksunil200114@gmail.com}
\address{Sunil Kumar Mallick,
Department of Mathematics and Statistics,
Indian Institute of Technology Tirupati,
Tirupati, Andhra Pradesh
India, 517619.}

\keywords{Additive decompositions, linear algebraic groups, polynomials over finite fields}
\subjclass[2020]{ Primary 15B33; Secondary 11T06.}


\maketitle

\section{Introduction}\label{intro}

Larsen, Shalev, and Tiep \cite{LST}   proved the following assertion on their way to proving Waring's problem for finite simple groups; also see \cite{Sh}: 
Let $n \geq 2$ be a positive integer, $q$ a prime power, and $F := \F_q$ be the finite field with $q$ elements. Let $X$ be a conjugacy class of an element of $\GL_n(F)$ whose characteristic polynomial is irreducible of degree $n$ and $Y$ be a conjugacy class of an element whose characteristic polynomial is of the form $(T- \lambda) h(T)$ where $\lambda \in F$,  $h(T)$ is irreducible of degree $n-1$ and $h(\lambda) \neq 0$. Then every non-identity element whose determinant is $\det(X) \cdot \det(Y)$ can be written in the same number of ways as a product $x \cdot y$ with $x \in X$ and $y \in Y$, if $q$ is sufficiently large. In this article, we ask whether  an analogous statement holds in $M_n(F)$, and prove that if $X$ and $Y$ are fixed conjugacy classes in $M_n(F)$ of the prescribed type as above, then every \tit{nonscalar} element of $M_n(F)$ with  trace equal to $\tr(X) + \tr(Y)$ can be expressed in \textit{approximately} the same number of ways as a sum $x + y$ where $x \in X$ and $y \in Y$; see Theorem \ref{mainthm1} below for a precise formulation.

The present result was motivated in part by the Matrix Waring Problem. Our earlier work \cite{Ki, KS, KVZ} established \tit{existence} of Waring decompositions for sufficiently large finite fields. The purpose of the present paper is different: rather than merely establishing existence, we obtain a uniform asymptotic formula for the \textit{number} of additive decompositions with the summands constrained to lie in the prescribed conjugacy classes.

We now state the results.  Let $X$ be a conjugacy class in $M_n(F)$ whose characteristic polynomial $\chi_X(T)$ is irreducible of degree $n$, and $Y$ be a conjugacy class in $M_n(F)$ whose characteristic polynomial $\chi_Y(T) := (T- \lambda) h(T)$, where $\lambda \in F$, $h(T)$ is irreducible of degree $n-1$ and $h(\lambda) \neq 0$. For $z \in M_n(F)$, let
$$
N_{X, Y} (z) := \# \Set{(x,y) \in X \times Y : x + y = z},
$$
namely the number of representations of $z$ as a sum $x + y$ such that $x \in X$ and $y \in Y$. Let $\tr(z)$ and $\tr(X)$ denote the trace of $z$ and the trace of $X$ respectively. If $z = x + y$ with $x \in X$ and $y \in Y$, then necessarily $\tr(z)  = \tr(X) + \tr(Y)$. Thus $N_{X, Y}(z) = 0$ if $\tr(z) \neq \tr(X) + \tr(Y)$, and so we are interested only in the case where $\tr(z) = \tr(X) + \tr(Y)$, i.e. when $z$ has \textit{matching trace}. However, this is not a serious restriction at all, because we eventually prove that except when $(n,q,\tr(z)) = (2,2,1)$,  given any nonscalar $z \in M_n(F)$ there exist conjugacy classes $X$ and $Y$ of the above type such that $\tr(z) = \tr(X) + \tr(Y)$; see Corollary \ref{cor2} below.

First, we can immediately resolve the case where $z$ is a scalar matrix. Indeed, suppose that $z \in M_n(F)$ is a \textit{scalar} matrix with trace $\tr(X) + \tr(Y)$. Clearly, $N_{X, Y}(z)$ is equal to  $\# \Set{ X \cap (aI-Y) }$. We claim that the set $X \cap (aI - Y)$ is empty. Suppose, on the contrary that there exist elements $x \in X, y \in Y$ such that  $x = aI -y$. Since $\lam \in F$ is an eigenvalue of $y$, $a- \lam \in F$ is an eigenvalue of $aI - y$, hence that of $x$, contrary to the hypothesis $\chi_X(T)$ is irreducible over $F$.  Thus $N_{X, Y}(z) = 0$ for scalar $z$ \tit{even} with matching trace,  and so we are interested in the nontrivial part of the following results:

\begin{thm}\label{main_theorem_n=2}
Let $X$ be a conjugacy class in $M_2(F)$ whose characteristic polynomial is irreducible of degree $2$, and $Y$ be a conjugacy class in $M_2(F)$ whose characteristic polynomial is of the form $(T - \lambda)(T- \mu)$, where $\lambda, \mu \in F$ and $\lambda \neq \mu$. Let $z \in M_2(F)$ with trace $\textrm{Tr}(X) + \textrm{Tr}(Y)$. Then $N_{X, Y}(z) = 0$ if $z$ is a scalar matrix, otherwise $N_{X, Y}(z) \in \Set{q-1,q,q+1}$. 
\end{thm}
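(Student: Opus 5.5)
The plan is to reduce the count to counting the $F$-points of an affine plane conic, and to show that this conic is always nondegenerate when $z$ is nonscalar.

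\textbf{Reduction to a determinant condition.} For $y\in Y$ we have $\tr(z-y)=\tr(z)-\lambda-\mu=\tr(X)$. A $2\times 2$ matrix with irreducible characteristic polynomial is cyclic, so it lies in $X$ exactly when its characteristic polynomial equals $\chi_X$. Hence
$$N_{X,Y}(z)=\#\Set{y\in Y:\det(z-y)=\det(X)}.$$
Every $y\in Y$ can be written uniquely as $y=\lambda\id+(\mu-\lambda)P$, where $P$ is a rank-one idempotent (the projection onto the $\mu$-eigenline along the $\lambda$-eigenline). Write $P=\begin{pmatrix}a&b\\ c&1-a\end{pmatrix}$ with $a(1-a)=bc$. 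These $P$ form a smooth affine quadric surface $Q\subset\A^3$ with $q^2+q$ points, matching $|Y|=q(q+1)$.

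\textbf{The condition is linear in $P$.} Put $A=z-\lambda\id$, which is nonscalar. Since $P$ has rank one, the matrix determinant lemma gives
$$\det(A-(\mu-\lambda)P)=\det A-(\mu-\lambda)\tr(\adj(A)P).$$
So the condition becomes $\tr(\adj(A)P)=\kappa$ for a fixed $\kappa\in F$. Because $\adj(A)$ is nonscalar, the map $P\mapsto\tr(\adj(A)P)$ restricted to $\{a,b,c\}$-space is a nonconstant affine function. Thus $N_{X,Y}(z)=\#(Q\cap H)(F)$ for an affine plane $H\subset\A^3$, and $Q\cap H$ is an affine plane conic $C$.

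\textbf{Counting once $C$ is nondegenerate.} If the projective closure $\ol C$ is a smooth conic, it has $q+1$ rational points. Of these, $0$, $1$ or $2$ lie on the line at infinity of $H$, according as that line meets $\ol C$ in a conjugate pair, tangentially, or in two rational points. This gives $N_{X,Y}(z)\in\Set{q+1,q,q-1}$. The scalar case was already handled above, so this would finish the proof.

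\textbf{Main obstacle: excluding degenerate $C$.} A degenerate $\ol C$ is one of: a double line, a pair of rational lines, or a conjugate pair of lines meeting in a rational point. I plan to exclude each using the irreducibility of $\chi_X$ and the nonscalarity of $z$. A degenerate section occurs precisely when $H$ is tangent to $\ol Q$, i.e.\ $\ol H$ is a tangent plane at some point of $\ol Q$; such a tangency point is then defined over $F$.

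Consider first a finite tangency point $P_0\in Q(F)$. The tangent plane at $P_0$ cuts $Q$ in the two rulings through $P_0$: the idempotents with the same image as $P_0$, and those with the same kernel. So every $y$ with a prescribed $\mu$-eigenline $L$ (or a prescribed $\lambda$-eigenline) would satisfy $\det(z-y)=\det X$. Along the family of $y$ with $\mu$-eigenline $L=\ip{v}$, the function $\det(z-y)$ is affine in the parameter. I will show it cannot be constant: otherwise evaluating at suitable members forces $v$ to be an eigenvector of $z-y$ with eigenvalue in $F$, and if $z$ stabilizes no line one gets a direct contradiction with $z$ nonscalar. The same argument applies to the kernel family. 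Either way, $z-y$ would have an $F$-rational eigenvalue, contradicting the irreducibility of $\chi_X$.

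Tangency at a point at infinity will be handled by the same computation in homogeneous coordinates, or directly: it would make the leading part of the condition $\tr(\adj(A)P)$ degenerate on the asymptotic cone $bc+a^2=0$, which forces $\adj(A)$, and hence $z$, to be scalar. I expect this case analysis, rather than the point count, to be the part requiring the most care.
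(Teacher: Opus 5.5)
Your route is sound and genuinely different from the paper's. You parametrize $Y$ by rank-one idempotents, linearize $\det(z-y)$ with the matrix determinant lemma, and recognize $\ol{Q}$ as the Segre quadric of rank-$\leq 1$ matrices, whose two rulings are the families of $y\in Y$ with a fixed $\mu$-eigenline, resp.\ a fixed $\lam$-eigenline. This gives a characteristic-free argument. The paper instead uses the traceless decomposition with quadratic character sums for odd $q$, and a separate gradient computation, including an Artin--Schreier trace argument, for even $q$.

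Your finite-tangency argument is correct. The clean dichotomy is this: if $zv\in\ip{v}$, then $v$ is an $F$-rational eigenvector of $z-y$. Otherwise, in the basis $(v,zv)$ the family is $y=\begin{pmatrix}\mu&s\\0&\lam\end{pmatrix}$, and $\det(z-y)$ is affine in $s$ with slope $1$. Nonscalarity of $z$ plays no role here.

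The gap is the tangency-at-infinity case, where the one concrete argument you give is false. Tangency of $\ol{H}$ to $\ol{Q}$ at a point at infinity is a condition on the whole linear form, including the constant $\kappa$ (hence on $\det X$), and cannot be read off from the leading part. The leading part only governs how the line at infinity of $H$ meets the asymptotic conic $a^2+bc=0$. That line is tangent to the conic exactly when $z$ has a repeated eigenvalue, which is perfectly compatible with $z$ being nonscalar. Concretely, take $z=\lam\id+E_{12}$, with $E_{12}$ the matrix unit. Then $\adj(A)=-E_{12}$, the condition reads $c=-\kappa$, and $\ol{H}$ is the tangent plane to $\ol{Q}$ at the point at infinity $[E_{12}]$ precisely when $\kappa=0$, i.e.\ $\det X=0$. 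So this configuration is excluded only by the irreducibility of $\chi_X$, never by nonscalarity of $z$. This is exactly why the paper needs its Artin--Schreier argument in the case $\theta=0$, which is the repeated-eigenvalue case in characteristic $2$.

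Your other option, ``the same computation in homogeneous coordinates'', does work, but only after you observe that the Segre picture is uniform. A point at infinity of $\ol{Q}$ is $[vw^T]$ with $w^Tv=0$, and the tangent section there is again $\Set{[vw'^T]}\cup\Set{[v'w^T]}$. Each of these lines has exactly one point at infinity. Its $q$ affine rational points are precisely the idempotents with image $\ip{v}$, resp.\ with kernel $\ip{v}=\Ker w^T$. Hence in every degenerate case, all $q$ elements $y\in Y$ with some fixed $\mu$-eigenline satisfy $z-y\in X$. Your finite-case argument then applies verbatim, with no need to distinguish finite from infinite tangency.

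With this repair your method also gives the exact count. A point $[vw^T]$ at infinity lies on $\ol{C}$ iff $w^T\adj(A)v=0$, i.e.\ iff $\ip{v}$ is $z$-stable. Therefore $N_{X,Y}(z)=q+1-\#\Set{z\text{-stable lines}}$ in every characteristic. This refines the paper's even-characteristic bound and agrees with its odd-characteristic computation.
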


\begin{thm}\label{mainthm1}
Let  $n \geq 2$. Let $X$ be a conjugacy class in $M_n(F)$ whose characteristic polynomial is irreducible of degree $n$, and $Y$ be a conjugacy class in $M_n(F)$ whose characteristic polynomial is of the form $(T - \lambda)h(T)$, where $\lambda \in F$ and $h(T)$ is an irreducible polynomial over $F$ of degree $n-1$, and $h(\lambda) \neq 0$ (so that if $n =2$ the linear factors are distinct). Let $z \in M_n(F)$ with trace $\textrm{Tr}(X) + \textrm{Tr}(Y)$. Then $N_{X, Y}(z) =0$ if $z$ is a scalar matrix, otherwise
$$
\abs{N_{X, Y} (z) - q^{(n-1)^2} } \leq 42 q^{(n-1)^2-1}.
$$
In other words, $N_{X, Y}(z) = q^{(n-1)^2} + O\left(q^{(n-1)^2 -1} \right)$, with an absolute implied constant. 
\end{thm}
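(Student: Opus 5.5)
The plan is to turn $N_{X,Y}(z)$ into a count over eigenvectors and then count, fibre by fibre, matrices with prescribed characteristic polynomials inside an affine space of matrices.

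\textbf{Step 1 (rewriting the count).} Since $h$ is irreducible and $h(\lambda)\neq 0$, the polynomial $\chi_Y$ is squarefree. So $y\in Y$ if and only if $\chi_y=\chi_Y$, and similarly for $X$. Hence
$$N_{X,Y}(z)=\#\Set{x\in X : \chi_{z-x}(T)=(T-\lambda)h(T)}.$$
For such an $x$, the eigenvalue $\lambda$ of $y=z-x$ is simple, so there is a unique line $\ell=\langle v\rangle$ with $(z-x)v=\lambda v$, that is, $xv=(z-\lambda)v$. This gives
$$N_{X,Y}(z)=\sum_{\ell}\#\Set{x\in X : xv=(z-\lambda)v,\ \chi_{\bar y}=h},$$
where $\ell$ runs over the $(q^n-1)/(q-1)$ lines and $\bar y$ is the map induced by $z-x$ on $V/\ell$.

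Two kinds of lines contribute nothing:
\begin{itemize}
\item lines with $(z-\lambda)v=0$;
\item lines where $(z-\lambda)v$ is a multiple of $v$, i.e.\ $v$ is an eigenvector of $z$. Then $xv\in\ell$, which is impossible because $\chi_X$ is irreducible, so $x$ has no eigenvectors.
\end{itemize}
Since $z$ is nonscalar, every eigenspace of $z$ has dimension at most $n-1$. So the number of such lines is $O(q^{n-2})$ with an absolute constant, which is negligible relative to the main term.

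\textbf{Step 2 (the generic fibre).} Fix $v$ with $v,w:=(z-\lambda)v$ linearly independent, and extend to a basis $v,w,e_3,\dots,e_n$. The set $\{x : xv=w\}$ is an affine space $A_v$ of dimension $n^2-n$. For $x\in A_v$, write $x$ in block form relative to $V=\ell\oplus W$; this gives the first column and the $(n-1)\times(n-1)$ block. The induced map $\bar y$ is $\bar z-\bar x$, where $\bar x$ is the lower-right block and $\bar z$ is the map induced by $z$ on $V/\ell$.

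I will count $x\in A_v$ satisfying two conditions: $\chi_x=\chi_X$, which is $n$ conditions of which the trace is one; and $\chi_{\bar z-\bar x}=h$, which is $n-1$ conditions. The trace condition is shared, because $\operatorname{tr}\bar y=\operatorname{tr}(z)-\operatorname{tr}(x)-\lambda$. So there are $2n-2$ independent conditions, and the expected fibre size is $q^{n^2-n-(2n-2)}=q^{n^2-3n+2}$.

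Summing over roughly $q^{n-1}$ lines gives $q^{n^2-2n+1}=q^{(n-1)^2}$, as required. Note that the entries of $x$ outside $\bar x$ enter $\chi_x$ only through the bordered determinant. Expanding $\det(T-x)$ along the first row and column, $\chi_x$ is determined by $\bar x$ and the first row $r$ of $x$. Since $xv=w$ with $w=e_2$, the first column of $x$ is $e_2$, and
$$\chi_x(T)=T\chi_{\bar x}(T)-r\,\adj(T-\bar x)\,e_2\ \text{(suitably indexed)}.$$
For fixed $\bar x$, the map $r\mapsto\chi_x$ is affine-linear in $r$. It is surjective onto monic polynomials with the correct $T^{n-1}$ coefficient exactly when $e_2$ is a cyclic vector for $\bar x^{\top}$ in the appropriate sense.

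\textbf{Step 3 (the count and the error).} So for each $\bar x$ with $\chi_{\bar z-\bar x}=h$ and the relevant cyclicity, exactly $q^{(n-1)-(n-1)}\cdot q^{\ast}$ rows $r$ work; a fixed power of $q$ comes from the free entries. The problem reduces to counting
$$\Set{\bar x\in M_{n-1}(F): \chi_{\bar z-\bar x}=h},$$
which is a translate of the conjugacy class of $h$. Because $h$ is irreducible, this class has exactly $|\GL_{n-1}(F)|/(q^{n-1}-1)$ elements, and every element is cyclic with respect to every nonzero vector. This last point is the key use of the irreducibility of $h$: the cyclicity condition is automatic.

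Multiplying out and summing over lines gives
$$\frac{q^n-1}{q-1}\cdot\frac{|\GL_{n-1}(F)|}{q^{n-1}-1}\cdot q^{-(n-1)}\cdot(\text{free entries}).$$
Using $\prod_{i\ge1}(1-q^{-i})\ge 1-q^{-1}-q^{-2}$, this equals $q^{(n-1)^2}\bigl(1+O(q^{-1})\bigr)$ with an explicit absolute constant. The contribution of the excluded lines from Step 1 is also $O(q^{(n-1)^2-1})$.

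\textbf{Main obstacle.} The delicate point is Step 2's bookkeeping: the trace compatibility must be used exactly once, and the basis is $z$-dependent. The choice $w=(z-\lambda)v$ must be handled uniformly, since for special $v$ the vector $w$ lies in small $z$-invariant subspaces and $\bar z$ degenerates. I will first try to show that the fibre count is exactly constant for all $v$ outside eigenvectors of $z$. If that fails, I will bound the exceptional $v$ by counting vectors in the union of proper $z$-invariant subspaces, which number $O(q^{n-1})$ lines in the worst case. Collecting these explicit constants is where I expect the number $42$ to come from. For $n=2$ the computation is exact and recovers Theorem \ref{main_theorem_n=2}.
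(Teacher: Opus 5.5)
Through Step 2 your reduction is essentially the paper's. You fibre over the $\lambda$-eigenline $\ell=Fv$ of $y=z-x$ and discard the $z$-stable lines. The bordered determinant $\chi_x(T)=T\chi_{\bar x}(T)-r\,\operatorname{adj}(T-\bar x)e_2$ then shows the following. For each $\bar x$ with $\chi_{\bar z-\bar x}=h$ there is exactly one admissible first row $r$ if $e_2$ is cyclic for $\bar x$ (for $\bar x$ itself, not $\bar x^{\top}$), and none otherwise. There are no further free entries, since $xv=w$ fixes the first column.

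\textbf{The gap is in Step 3, where you declare the cyclicity condition automatic.} Irreducibility of $h$ makes every nonzero vector cyclic for $\bar y=\bar z-\bar x$, which lies in $\mathcal{C}_h$. But the condition concerns $\bar x=\bar z-\bar y$, an element of the translate $\bar z-\mathcal{C}_h$, and nothing forces cyclicity there. In the paper's notation ($A=\bar z$, $B=\bar y$, $c=e_2$) the excluded set is $Z_{A,c}=\{B\in\mathcal{C}_h : c \text{ not cyclic for } A-B\}$, and it is genuinely nonempty for $n\ge 3$.

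You can see this directly. Your claim would give exactly $N_{X,Y}(z)=G_z\cdot\#\mathcal{C}_h$, where $G_z$ is the number of non-$z$-stable lines. Summing over all $z$ of trace $\operatorname{tr}(X)+\operatorname{tr}(Y)$ then gives $\frac{q^n-1}{q-1}(q^{n^2-1}-q^{n^2-n})\#\mathcal{C}_h$. The true total is $\#X\cdot\#Y$, and the true total divided by yours is $\prod_{k=1}^{n-2}(1-q^{-k})$, for instance $1-1/q$ when $n=3$. So cyclicity is automatic only for $n=2$, where $\dim W=1$. For $n\ge3$ the non-cyclic $B$'s remove on average about $q^{m^2-m-1}$ elements from each fibre.

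\textbf{What is missing is a uniform bound $\#Z_{A,c}\le Cq^{m^2-m-1}$ with $C$ absolute.} This is the core of the paper's argument (Proposition~\ref{nonzstable}), which runs as follows.
If $c$ is not cyclic for $A-B$, its cyclic span $U$ is a proper subspace containing $c$ with $(A-B)U\subseteq U$. For fixed $U$ of dimension $r$, this forces $B|_U$ to lift $\pi\circ A|_U:U\to W/U$, leaving at most $q^{r^2}$ choices. Each such restriction must be a simple partial map, because $\chi_B=h$ is irreducible. Ram's lemma then gives $\prod_{j=r+1}^{m-1}(q^m-q^j)$ extensions in $\mathcal{C}_h$, so $\#\mathcal{F}_{A,U}\le q^{m^2-m-r(m-r)}$. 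Summing over $U\supseteq Fc$ with the Gaussian-binomial bound $8q^{(r-1)(m-r)}$ gives $\#Z_{A,c}\le 16q^{m^2-m-1}$.
Combined with $|\#\mathcal{C}_h-q^{m^2-m}|\le 2q^{m^2-m-1}$, $|G_z-q^{n-1}|\le 6q^{n-2}$ and $G_z\le 2q^{n-1}$, this is where $6+2\cdot18=42$ comes from.

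Your fallback of isolating exceptional lines $v$ does not address this. The defect lives inside the fibres as a set of $B$'s, not as a set of bad lines. Moreover, an exceptional set of $O(q^{n-1})$ lines is of the same order as the set of all lines, so it could not be absorbed into an $O(q^{(n-1)^2-1})$ error.
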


\noindent Combining Theorem \ref{mainthm1} with a trace-realization argument, Lemma \ref{tracerealization}, yields the following corollary:

\begin{cor}\label{cor2}
Let $n \geq 2$, and let $z \in M_n(F)$ be nonscalar. Put $t = \tr(z)$, and suppose $(n,q, t) \neq (2,2,1)$.  Then there exist conjugacy classes $X, Y \sub M_n(F)$ such that the characteristic polynomial of $X$ is irreducible of degree $n$, and the characteristic polynomial of $Y$ is of the form $(T - \lambda) h(T)$, where $h$ is an irreducible polynomial over $F$ of degree $n-1$ and $h(\lambda) \neq 0$ (so that if $n = 2$, we require the two linear factors be distinct.) Moreover, $\tr(z)= \tr(X) + \tr(Y)$ and the number $N_{X,Y}(z)$ of such decompositions satisfies the following estimate:
$$
\left| N_{X,Y}(z) - q^{(n-1)^2} \right| \leq 42 q^{(n-1)^2 -1}.
$$
\end{cor}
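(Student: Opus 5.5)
The corollary reduces to Theorem \ref{mainthm1} once we show that suitable classes with the right traces exist. That is the content of the trace-realization Lemma \ref{tracerealization}, which I would prove here. Once $X$ and $Y$ of the prescribed types are found with $\tr(X)+\tr(Y)=t$, Theorem \ref{mainthm1} applies to the nonscalar $z$ and gives the estimate directly.

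For the realization step I would exploit the freedom in choosing both characteristic polynomials. Every monic irreducible polynomial of degree $d$ over $F$ is realized as a characteristic polynomial (via its companion matrix), and the trace of such a class is minus the subleading coefficient. So I need a monic irreducible $f$ of degree $n$, a monic irreducible $h$ of degree $n-1$, and $\lambda\in F$ with $h(\lambda)\neq 0$, such that
\[ -f_{n-1} + \lambda - h_{n-2} = t, \]
where $f_{n-1}$ and $h_{n-2}$ denote the subleading coefficients.

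The cleanest tool is the known fact that for every degree $d\ge 1$ and every $a\in F$ there is a monic irreducible polynomial of degree $d$ with trace $a$. The only exception is $d=1$ in the sense that the trace then pins down the polynomial; for $d\ge 2$ this is classical (Carlitz; it also follows from the surjectivity of $\Tr_{\F_{q^d}/F}$ restricted to the elements of degree exactly $d$). I would then split into cases.

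If $n\ge 3$, then $n-1\ge 2$ and no $\lambda\in F$ is a root of an irreducible $h$ of degree $\ge 2$, so $h(\lambda)\neq0$ is automatic. Choose any $\lambda$ and any irreducible $h$ of degree $n-1$, and then choose an irreducible $f$ of degree $n$ with trace $t-\lambda-\tr(h)$.

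If $n=2$, we need $f$ irreducible quadratic with trace $a$, and $\lambda\neq\mu$ in $F$ with $a+\lambda+\mu=t$. For $q\ge3$, fix any $a$ admitting an irreducible quadratic (every $a$ does when $q$ is odd, and for even $q$ the case $a=0$ gives $x^2+c$, which is never irreducible, but $a\neq0$ works). Then choose distinct $\lambda,\mu$ with $\lambda+\mu=t-a$, which is possible whenever $q\ge3$, or in characteristic $2$ whenever $t-a\neq0$. For $q=2$, the only irreducible quadratic is $T^2+T+1$, so $a=1$, and necessarily $\{\lambda,\mu\}=\{0,1\}$. This gives $t=1+1=0$, so $t=1$ is impossible, which is exactly the excluded case $(2,2,1)$.

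The main obstacle is the existence of irreducible polynomials of prescribed degree and trace, together with the small-field bookkeeping in the case $n=2$. For the former I would cite the known result, or argue by a count: the number of elements of $\F_{q^d}$ of trace $a$ is $q^{d-1}$, while the elements lying in proper subfields number at most $\sum_{e\mid d,\,e<d} q^e < q^{d-1}$ when $d\ge3$. The case $d=2$ can be checked directly.
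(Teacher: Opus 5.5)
Your proof is correct. Its overall structure matches the paper: the corollary is Lemma~\ref{tracerealization} combined with Theorem~\ref{mainthm1}. The difference lies in how you realize the trace when $n\ge 3$.

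You keep $\lambda$ arbitrary and force the trace of $f$. That requires an irreducible polynomial of degree $n$ with prescribed trace. Your counting argument supplies one for $d=n\ge 3$, which is all you use: $\F_{q^d}$ has $q^{d-1}$ elements of trace $a$, but at most $\sum_{e\mid d,\,e<d}q^e<q^{d-1}$ elements of smaller degree.

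The paper instead takes arbitrary irreducible $f$ and $h$ and sets $\lambda:=t-\tr(f)-\tr(h)$. You had already noted that $\lambda$ is unconstrained, because $h$ has no roots in $F$. That freedom alone absorbs the trace, so the only input needed is the existence of irreducible polynomials of each degree. The paper's route is therefore more elementary; yours costs an extra (correct) lemma and gains nothing.

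For $n=2$ the two arguments coincide in substance: both move the split pair $\lambda,\mu$. The paper makes the quadratic explicit. For odd $q$ it uses $T^2-d$ (trace $0$) with $\lambda,\mu=t/2\pm\delta$. For even $q>2$ it uses the Artin--Schreier polynomial $T^2+aT+a^2c$ with $a\notin\{0,t\}$.

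Two small points of wording need fixing:
\begin{itemize}
\item Your ``classical fact'' is false for $d=2$, $q$ even, $a=0$, as you yourself observe later. State it for $d\ge 3$, or with that exception noted.
\item In the case $n=2$ with $q$ even, you must choose $a\notin\{0,t\}$, not just any $a$ admitting an irreducible quadratic. This is possible because $q\ge 4$. Your phrase ``possible whenever $q\ge 3$'' is only true for odd $q$.
\end{itemize}
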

\noindent In other words, the quantities $N_{X,Y}(z)$ as $z$ varies over all nonscalar matrices are \tit{approximately} equal, if $q$ is sufficiently large. Moreover, note that in the case where $n = 2$, Theorem \ref{main_theorem_n=2} is stronger than Theorem \ref{mainthm1}. Corollary \ref{cor2} also shows that the only obstruction to choosing conjugacy classes of the prescribed types with matching trace is $(n,q, \tr(z)) = (2, 2, 1)$. 

One might wonder whether it is possible for $N_{X,Y}(z)$ can be equal for all nonscalar $z$. That is not the case. Indeed, consider the conjugacy action of $\GL_n(F)$ on $M_n(F)$. For a fixed $x_{0}\in X$, the stabilizer of $x_{0}$ is the centralizer of $x_{0}$ in $\GL_n(F)$ and is equal to the multiplicative group of the finite field with $q^n$ elements. Therefore, 
\begin{align*}
\#{X} &= \frac{\#{\GL_n(F)}}{q^{n}-1} = q^{\frac{n(n-1)}{2}} \prod_{k=1}^{n-1}(q^{k}-1).
\end{align*}
Similarly, the centralizer of $y_{0}\in Y$ in $\GL_n(F)$ is the product of the multiplicative group of finite field with $q$ elements and the multiplicative group of finite field with $q^{n-1}$ elements, so that its order is $(q-1)(q^{n-1}-1)$. Hence 
\begin{align*}
\#Y = \frac{\#{\GL_n(F)}}{(q-1)(q^{n-1}-1)} = q^{\frac{n(n-1)}{2}}\prod_{k=2, k\ne n-1}^{n}(q^{k}-1).
\end{align*}
It follows that, $$\#{X\times Y}=q^{n(n-1)}(q-1)(q^{n-1}-1)(q^{n}-1)\prod_{k=2}^{n-2}(q^{k}-1)^{2}.$$

\noindent Now, observe  that the set $Z := \Set{ M \in M_n(F) : \tr(M)=\tr(X)+ \tr(Y) }$ is an affine subspace of $\A^{n^2}_{F}$ of dimension $n^{2}-1$, so that its size is $q^{n^{2}-1}$. Consider the addition map $\phi:X\times Y \longrightarrow Z$ given by $\phi(x,y)=x+y$. Suppose, without loss of generality, that the map is surjective; otherwise, the conclusion is immediate. If each fiber of $\phi$ has the same cardinality then the cardinality of $Z$, which is $q^{n^{2}-1}$, must divide the $\# X\times Y$. But $q^{n^{2}-1}$ does not divide $q^{n(n-1)}$, since $n^{2}-1-n(n-1)=n-1>0$ for all $n \geq 2$.   Thus, there exist elements $z,z' \in M_n(F)$ such that $\tr(z)=\tr(z')=\tr(X)+\tr(Y)$ but $\# \Set{(x,y)\in X\times Y\mid z=x+y } \neq \# \Set{(x,y)\in X\times Y\mid z'=x+y}$.

We now give a brief description of the strategy of proofs of the above results. First consider Theorem \ref{main_theorem_n=2}. 
When $q$ is odd, we first observe that every matrix in $M_2(F)$ can be expressed uniquely as a sum of a scalar matrix and a traceless matrix and then proceed to replace the set $\Set{ (x,y) \in X \times Y : x + y = z}$ with the product $X' \times Y'$ of conjugacy classes $X', Y'$ in $\sl_2(q)$, $2 \times 2$ traceless matrices equipped with a quadratic form $Q: \sl_2(q) \to F$ and the associated bilinear form $\ip{A,B} := Q(A+B) - Q(A) - Q(B)$. So the triple $(x,y,z)$ is replaced by $(X_0, Y_0, Z_0)$ with $Z_0 = X_0 + Y_0$. After writing $X_0 = Z_0 - Y_0$, the conditions $Q(X_0) = r$ and $Q(Y_0) = s$ reduce to a quadratic equation $Q(Y_0) = s$ together with the linear equation $\ip{Z_0, Y_0} = Q(Z_0) + s - r$. Thus the required count $N_{X,Y}(z)$ becomes the number of points in the intersection of the quadratic surface and the plane; we count them in terms of the Dirichlet quadratic character $\chi:F \to \Set{-1,0,1}$  with the aid of the well-known orthogonality relation $\sum_{a \in F} \chi(a) = 0$.

The case when $q$ is even is slightly more delicate, since the above decomposition of $M_2(F)$ is not available in this case. Instead, we work directly with a  characterization of the set $X \times Y$ in terms of conditions imposed on the trace and the determinant of the characteristic polynomials of $X$ and $Y$. The trace condition defines an affine space $\A_{F}^3$, and one of the determinant conditions defines an affine surface  and the other defines a plane in \tit{this} affine space. We prove that their intersection in the associated projective space  is a nondegenerate conic, so that the intersection is isomorphic to the projective line $\P^1_{\overline{F}}$. Finally, we consider its $F$-rational points, which are at most $q+1$ in number.

Now consider the main Theorem \ref{mainthm1}. Let $V := F^n$ be the $n$-dimensional vector space over $F$, and $\P(V)$ be the associated projective space. We use the counting-in-two-ways strategy. Given a nonscalar $z$ in $M_n(F)$ and a choice of $X$ and $Y$ with prescribed characteristic polynomials, consider the map 
\begin{align*} \pi_z : \Set{ (x,y) \in X \times Y : x + y  =z} &\to \P(V) \\
(x,y) &\mapsto L_y
\end{align*}
where $L_y$ is the unique $\lambda$-eigenline of $y$, and count $N_{X, Y}$ as a sum of the cardinalities of the fibers $\pi_z^{-1}(L)$, as $L$ varies over $\P(V)$. We then show that precisely those $L_y \in \P(V)$ contribute to the sum which are \tit{not} $z-$stable. Let $z = \begin{pmatrix} a & b \\ c & A \end{pmatrix}$ and $y = \begin{pmatrix} \lambda & r \\ 0 & B \end{pmatrix} \in Y$ where $\lambda \in F$, $V = L_y \oplus W$, i.e. $W$ is a fixed complement of $L_y$ in the vector space $V$, $r \in W^*, c \in W$, and $A, B \in \End_{F}(W)$. Thus the characteristic polynomial of $y$ is $\chi_y(T) =(T- \lam) \chi_B(T)$, and it follows from the equivalence in Lemma \ref{conjugacy_characteristic_equivalence} that $y \in Y$ if and only if $\chi_B(T) = h(T)$; see \S 2 for notation. Let 
$$
\CalC_h = \Set{B \in \End_{F}(W) : \chi_B(T) = h(T)}.
$$
Then we observe that among the lines which are not $z$-stable, the cardinality of the fiber is given by
$$
\#\pi_z^{-1}(L) = \# \Set{B \in \CalC_h : c \trm{ is cyclic for } A - B}.
$$
Instead of counting the matrices $B \in \CalC_h$ for which $c$ is a cyclic vector for $A- B$, we count those elements $B \in \CalC_h$ for which $c$ is \tit{not} a cyclic vector in $A- B$, and subtract it from the cardinality of $\CalC_h$. Based on a result of Ram \cite{Ra} we  obtain the following crucial estimate
: for every non-$z$-stable line $L$, we have
$$
\abs{\# \pi_z^{-1}(L)  - q^{(n-1)(n-2)}} \leq 18q^{(n-1)(n-2) -1}.
$$
This leads to the desired estimate in the theorem.

In passing, it is natural to ask whether one could use the fundamental result of Lang-Weil \cite{LW}, by interpreting our problem as counting points on a geometrically irreducible affine variety $V$ of dimension $d$ defined over $F$:
$$
\# V(F) = q^d + O\left(q^{d - \frac{1}{2}} \right),
$$
where the implicit constant depends on the geometric complexity of the variety. Hence, in our situation, even if the relevant variety were shown to be geometrically irreducible of dimension $d =(n-1)^2$, the relevant constant may depend on $n$ and the geometric complexity of the varieties in this family. By contrast, the constant $42$ in Theorem \ref{mainthm1} is absolute and is independent of both $q$ and $n$, and even more, we obtain a better $O(1/q)$ relative error compared to the $O(1/\sqrt{q})$ relative error of Lang-Weil.  Consequently, the estimate in Theorem \ref{mainthm1} does not follow from the general Lang--Weil theorem; its stronger error term and uniformity arise from the particular linear-algebraic and conjugacy-class structure of the additive decomposition problem.

\section{Notation}\label{notation}
Throughout the article, $F$ denotes the finite field with $q$ elements, and $M_n(F)$ denotes the ring of $n \times n$ matrices with entries in $F$. Let $\id_n$ denote the $n \times n$ identity matrix; when there is no possibility of confusion, we simply denote it by $\id$, suppressing the size $n$.

Let $V := F^n$ be the $n$-dimensional vector space over $F$. The characteristic polynomial of $x \in M_n(F)$ is denoted by $\chi_x(T) \in F[T]$, that of a conjugacy class, say $X$, in $M_n(F)$ is denoted by $\chi_X(T) \in F[T]$, and that for $A \in \End_{F}(V)$ is denoted by $\chi_A(T)$.
 
Let $W$ be a vector subspace of $V$. Let $W[T] := W \otimes_{F} F[T]$, and we can think of $W[T]$ as $\Set{ w_0 + w_1 T + \ldots + w_d T^d : w_i \in W}$, a free $F[T]$-module of rank $m$. Let $S \in \End_{F}(W)$. By extension of scalars, $S$ acts $F[T]$-linearly on $W[T]$: $S_T := S \otimes 1$, so $S_T(w \otimes f(T)) := S(w) \otimes f(T)$. Instead of writing $S_T$, we suppress the subscript and simply write it as $S.$ For example, $T \id -S := T \id_{W[T]} - (S \otimes 1 ) \in \End_{F[T]}(W[T])$. Since $W$ is finite-dimensional, there is a natural identification $\End_{F}(W) \otimes_{F} F[T] \cong \End_{F[T]}(W[T]) \cong \End_{F}(W)[T]$. Its elements are polynomial expressions $A_0 + A_1 T + \ldots + A_d T^d$, where $A_i \in \End_{F}(W)$. In particular, we consider  $\adj(T \id-S)$, the adjoint of the linear transformation $T \id -S$,  as an element of $\End_{F}(W)[T]$. For $r \in W^*$ we consider the canonical $F[T]$-linear map $r \otimes 1 : W[T] \to F[T]$, and denote it simply by $r$.

We adopt the following \tit{nonstandard} notation. Let $Y$ be a conjugacy class  in $M_n(F)$ whose characteristic polynomial $\chi_Y(T) = (T- \lam) h(T)$, with $h(T) \in F[T]$ irreducible and degree $m := n-1$ and $h(\lam) \neq 0$. Let us denote by $L_y \sub V$ the $\lam$-eigenline (the one-dimensional subspace spanned by the  eigenvector (unique up to a scalar) corresponding to $\lambda$) fixed by $y$, and we fix  a complement $W$ of $L_y$ in $V$, so that $V = L_y \oplus W$ with $\dim W = m$. The matrix of $y$ is of the form 
$
\begin{pmatrix} \lam & r  \\ 0 & B \end{pmatrix},
$
where $r \in W^*, B \in \End(W)$. Note that we are not choosing a basis for $W$, as we wouldn't be dealing with coordinates explicitly, so the \tit{matrix} above is not standard, as its entries not elements of $F$.

Finally, let $\ol{F}$ be the algebraic closure of $F$. For $n \geq 0$, Let $\A_{F}^n$ be the $n$-dimensional affine space over $F$, and $\A_{\ol{F}}^n$ be the $n$-dimensional affine space over $\ol{F}$. Let $\P^n_F$ be the $n$-dimensional projective space over $F$ and $\P^n_{\ol{F}}$ denote the $n$-dimensional projective space over $\ol{F}$.
\section{Case $n=2$}

\subsection{Odd characteristic case}

Suppose the characteristic of $F$ is not $2$. 
Let $X$ be a conjugacy class whose characteristic polynomial is irreducible of degree $2$, and $Y$ be a conjugacy class in $M_2(F)$ whose characteristic polynomial is of the form $(T- \lam)(T - \mu)$ where $\lam, \mu \in F$ and $\lam \neq \mu$. Suppose $z$ is a nonscalar matrix having matching trace. Since every matrix in $M_2(F)$ can be written uniquely as a sum of a scalar matrix and a traceless matrix, i.e., $M_2(F) = F \cdot \mathrm{I}_2 \oplus \sl_2(q)$, the elements $x \in X, y \in Y$ and $z$ with traces, say $a,c$ and $a+c$ respectively, can be expressed as the following sums:
$$
x = \frac{a}{2} \mathrm{I}_2 + X_0, \; \;  y = \frac{c}{2} \mathrm{I}_2 + Y_0, \; \;  z = \frac{a+c}{2} \mathrm{I}_2+ Z_0,
$$
where the traces $\tr(X_0)$, $\tr(Y_0)$ and $\tr(Z_0)$ are all zero. Since $z$ is nonscalar, the matrix $Z_0$ is \tit{nonzero}. In the following, we don't use the Lie algebra structure of $\sl_2(q)$, but since it is standard to denote traceless matrices by $\sl_2(q)$, we adopt that notation here.

Consider the quadratic form $Q : \sl_2(q) \to F$ defined by $Q \begin{pmatrix} u & v \\ w & - u \end{pmatrix} := - \det \begin{pmatrix} u & v \\ w & - u \end{pmatrix} = u^2 + vw$ and let $\ip{A,B} := Q(A+B) - Q(A) - Q(B)$ be the associated bilinear form. We remark that since $Q$ is constant on conjugacy classes of $\sl_2(q)$,  we are justified in denoting $Q(X')$ for a conjugacy class $X'$ in $\sl_2(q)$ to be equal to $Q(A)$ for any $A \in X'$.

 Finally, consider  the quadratic character $\chi: F \to \Set{1, -1, 0}$ defined by $\chi(x) = 1$ if $x$ is a nonzero square in $F$, $-1$ if  $x$ is not a nonzero square in $F$, and $0$ if $x = 0$.

\begin{lem}\label{characterization_X_Y}
There exist nonzero $r,s \in F$ such that $r$ is a nonsquare and $s$ is a square, and $X$ is in bijection with the ($\GL_2(F)$-) \tit{conjugacy class} $X' := \Set{ A \in \sl_2(q) : Q(A) = r } $ in $\sl_2(q)$ and $Y$ is in bijection with the conjugacy class $Y' := \Set{ B \in \sl_2(q) : Q(B) = s}$ in $\sl_2(q)$.

\end{lem}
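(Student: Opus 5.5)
The plan is to transport $X$ and $Y$ into $\sl_2(q)$ by removing the scalar part, and then to identify the resulting sets with level sets of $Q$.

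\textbf{Step 1: the bijection.} Let $a = \tr(X)$ and $c = \tr(Y)$. Since $q$ is odd, $2$ is invertible in $F$. The map $x \mapsto x - \frac{a}{2}\id_2$ is then a bijection from $X$ onto its image $X'$, and it commutes with conjugation by $\GL_2(F)$. Hence $X'$ is a single $\GL_2(F)$-conjugacy class, and every element of it is traceless. The same holds for $Y$ via $y \mapsto y - \frac{c}{2}\id_2$, with image $Y'$.

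\textbf{Step 2: computing $Q$ on these classes.} For $x \in X$ put $A = x - \frac a2 \id$. Then $\chi_x(T) = \chi_A(T - a/2)$, and $\chi_A(T) = T^2 + \det A = T^2 - Q(A)$. Therefore
\[
\chi_x(T) = \left(T-\tfrac a2\right)^2 - Q(A).
\]
Set $r := Q(A) = \frac{a^2}{4} - \det x$, which is one quarter of the discriminant of $\chi_X$. Irreducibility of $\chi_X$ is equivalent to $r$ being a nonsquare; in particular $r \neq 0$.

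Similarly, for $y \in Y$,
\[
\chi_y(T) = \left(T-\tfrac c2\right)^2 - s, \qquad s = \left(\tfrac{\lam-\mu}{2}\right)^2.
\]
Since $\lam \neq \mu$, this $s$ is a nonzero square.

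\textbf{Step 3: equality with the level sets.} So far we have $X' \subseteq \Set{A \in \sl_2(q) : Q(A)=r}$ and $Y' \subseteq \Set{B \in \sl_2(q) : Q(B)=s}$. For the reverse inclusions, take a traceless $A$ with $Q(A) = r$. Then $\frac a2 \id + A$ has characteristic polynomial $\chi_X$. The same argument applies to $Y$, with $\chi_Y$ in place of $\chi_X$.

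It remains to see that a matrix with characteristic polynomial $\chi_X$ lies in $X$. Because $\chi_X$ is irreducible, and because $\chi_Y$ has distinct roots, the characteristic polynomial equals the minimal polynomial in each case. Any matrix with this characteristic polynomial is therefore similar to the companion matrix, that is, conjugate to an element of the class. This is the equivalence stated in the paper as Lemma \ref{conjugacy_characteristic_equivalence}, which can be invoked here. With both inclusions in hand, $X'$ and $Y'$ are exactly the stated level sets, and each is a $\GL_2(F)$-conjugacy class.

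\textbf{Main obstacle.} The only non-formal step is Step 3: showing that the level set of $Q$ is a single conjugacy class rather than a union of several. This reduces to the fact that a $2\times 2$ matrix with separable characteristic polynomial is determined up to conjugacy by that polynomial. That fact is handled by the rational canonical form, so the obstacle is mild.
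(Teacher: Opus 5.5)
Your proof is correct and follows essentially the paper's route. You subtract the scalar $\frac{a}{2}\id$ (with $a$ the trace) and recognize $Q$ of the traceless part as one quarter of the discriminant of the characteristic polynomial. You then use the rational canonical form (i.e.\ Lemma \ref{conjugacy_characteristic_equivalence}) to see that each level set of $Q$ is a single conjugacy class. Your explicit reverse inclusion in Step 3 in fact spells out the surjectivity onto the level set, which the paper only asserts as ``manifestly a bijection''.
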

\begin{proof}
Let $C = \begin{pmatrix} a & b \\ c & d \end{pmatrix} \in M_2(F)$ and let $
\chi_C(T) := T^2 - (a+d) T + (ad-bc)$ be its characteristic polynomial. The discriminant of $\chi_C(T)$ is $\Delta(\chi_C) := (a+d)^2 - 4 (ad-bc) = (a-d)^2 + 4bc$.  We claim that
$$
\Delta(\chi_C) = 4 Q \left(C- \frac{(a+d)}{2} \id \right).
$$
Indeed,
$4 \cdot Q \left(C - \frac{(a+d)}{2} \id \right)= - 4 \cdot \det\left( C - \frac{(a+d)}{2} \id \right) = 4 \cdot \left( \left( \frac{a-d}{2} \right)^2 + bc \right) = (a-d)^2 + 4bc= \Delta(\chi_C)$,  as desired.

Now, we prove the characterization of $X'$. Let $r := Q(x- \frac{t}{2} \id)$, where $x \in X$ and $t = \tr(x)$. Since $\chi_x(T)$ is irreducible, its discriminant is a nonsquare. It follows from the above claim that $r$ is a nonsquare. For $A' \in \sl_2(q)$ such that $Q(A') = r$, its characteristic polynomial  $\chi_{A'}(T) = T^2 + \det(A') = T^2  - r$ is irreducible, because $r$ is not a square. Therefore, $\chi_{A'}(T)$ coincides with its minimal polynomial of $A'$, and $A'$ is conjugate to its rational canonical form $\begin{pmatrix} 0 & r \\ 1  & 0 \end{pmatrix}$. Thus, all matrices $A'$ in $\sl_2(q)$ with $Q(A') = r$ are conjugate to each other; in other words, $X'$ is a conjugacy class in $\sl_2(q)$ given by the above characterization. The map $x \mapsto  x- \frac{t}{2} \id$ is manifestly a  bijection of $X$ onto the set $X'$.

Now, we prove the characterization of $Y'$. Let $y \in Y$ and $s := Q(y- \frac{t}{2} \id)$ where $t = \tr(y)$. By the claim above, $s = \Delta(\chi_y)/4$. Since the  characteristic polynomial $\chi_y(T)$ of $y$ is a product of \tit{distinct} linear factors, its discriminant $\Delta(\chi_y)$ is a nonzero square, so $s$ is a nonzero square. On the other hand $\chi_{Y_0}(T) = \chi_{y - \frac{t}{2} \id}= T^2 - s$, so any matrix with characteristic polynomial $T^2 -s$ is conjugate to the diagonal matrix $\begin{pmatrix} \sqrt{s} & 0 \\ 0 & -\sqrt{s} \end{pmatrix}$, and the characterization of $Y'$ follows. Clearly, the map $y \mapsto \left(y- \frac{t}{2}I \right)$ is a bijection of $Y$ onto $Y'$.
\end{proof}

By the above Lemma \ref{characterization_X_Y}, it follows that the  map $(x,y) \mapsto (X_0, Y_0)$ is a bijection of  $\Set{(x,y) \in X \times Y : x + y = z}$ onto $\Set{(X_0, Y_0) \in \sl_2(q) \times \sl_2(q): Z_0 = X_0 + Y_0}$.  Therefore, we have the following characterization of the count $N_{X, Y}(z)$:
\begin{align*}
N_{X, Y}(z) &= \#\Set{ (x,y) \in X \times Y : x + y = z}, \\
&= \#\Set{(X_0, Y_0) \in X' \times Y' : Z_0 = X_0 + Y_0}, \\
&= \# \Set{Y_0 \in \sl_2(q) : Q(Y_0) = s, Q(Z_0 - Y_0) = r}, \\
&= \# \Set{Y_0 \in \sl_2(q): Q(Y_0) = s, \ip{Z_0,Y_0} = Q(Z_0) + s - r }.
\end{align*}
The third equality is a consequence of Lemma \ref{characterization_X_Y}, and the fourth equality is a consequence of the definition of the bilinear form. Now we further characterize $N_{X, Y}(z)$ in the following lemma.

\begin{lem}\label{further_char}
Let $Z_0 = \begin{pmatrix} \al & \beta  \\ \gamma  & - \alpha \end{pmatrix} \in \sl_2(q)$ and $Y_0 =  \begin{pmatrix} u & v \\ w & - u \end{pmatrix} \in \sl_2(q)$ be as above. Let $b := Q(Z_0) + s - r$. Then, 
$$
N_{X, Y}(z) = \# \Set{ (u,v,w) \in \A_{F}^3 :  u^2 + v w = s , 
2 \al u + \gam v + \beta w = b }.
$$
\end{lem}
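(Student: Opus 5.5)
This is a direct computation from the characterization just established, so I will unwind the last line of the displayed chain of equalities. By Lemma \ref{characterization_X_Y} and the chain preceding Lemma \ref{further_char}, we already have
$$
N_{X,Y}(z) = \# \Set{Y_0 \in \sl_2(q) : Q(Y_0) = s, \ \ip{Z_0, Y_0} = Q(Z_0) + s - r}.
$$
So it remains to express the two defining conditions in the coordinates $Y_0 = \begin{pmatrix} u & v \\ w & -u \end{pmatrix}$. We identify $\sl_2(q)$ with $\A^3_F$ via the linear bijection $Y_0 \mapsto (u,v,w)$. This map is clearly bijective, since a traceless $2\times 2$ matrix is determined by its entries $u,v,w$, which are arbitrary.

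The quadratic condition is immediate from the definition of $Q$: $Q(Y_0) = u^2 + vw$, so $Q(Y_0)=s$ becomes $u^2+vw=s$. For the linear condition, I compute $\ip{Z_0,Y_0} = Q(Z_0+Y_0) - Q(Z_0) - Q(Y_0)$ explicitly. Since
$$
Z_0 + Y_0 = \begin{pmatrix} \al+u & \beta+v \\ \gam+w & -(\al+u) \end{pmatrix},
$$
we get
$$
Q(Z_0+Y_0) = (\al+u)^2 + (\beta+v)(\gam+w).
$$
Subtracting $Q(Z_0) = \al^2+\beta\gam$ and $Q(Y_0)=u^2+vw$ leaves the cross terms $2\al u + \beta w + \gam v$. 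Thus $\ip{Z_0,Y_0} = b$ is exactly the equation $2\al u + \gam v + \beta w = b$ with $b = Q(Z_0)+s-r$.

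Combining the two translations under the bijection gives the stated equality of cardinalities. There is no real obstacle here. The only point requiring care is to justify the third equality in the chain, which relies on Lemma \ref{characterization_X_Y}: $X_0 = Z_0 - Y_0$ lies in $X'$ if and only if $Q(Z_0-Y_0)=r$, and $Y_0\in Y'$ if and only if $Q(Y_0)=s$. The fourth equality then follows by expanding $Q(Z_0-Y_0) = Q(Z_0)+Q(Y_0)-\ip{Z_0,Y_0}$, which uses $Q(-Y_0)=Q(Y_0)$ and the bilinearity of $\ip{\cdot,\cdot}$.
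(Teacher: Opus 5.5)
Your proposal is correct and matches the paper's proof: both identify $Y_0$ with $(u,v,w)$, read off $Q(Y_0)=u^2+vw$, and expand $Q(Z_0+Y_0)-Q(Z_0)-Q(Y_0)$ to obtain $2\al u+\beta w+\gam v$. Your extra justification of the step from $Q(Z_0-Y_0)=r$ to $\ip{Z_0,Y_0}=Q(Z_0)+s-r$, using $Q(-Y_0)=Q(Y_0)$ and bilinearity, is also correct. It fills in a step the paper states without comment.
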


\begin{proof}
Let $Y_0 = \begin{pmatrix} u & v \\ w & - u \end{pmatrix} \in \sl_2(q)$ such that $Q(Y_0) = s$. Then $Q(Y_0) = u^2 + v w$, and the equation $Q(Y_0) = s$ defines a quadric surface in the $3$-dimensional affine space $\A_{\overline{F}}^3$. 
On the other hand, the condition $\ip{Z_0, Y_0} = Q(Z_0 + Y_0) - Q(Z_0) - Q(Y_0) = b$ defines an affine plane in $\A_{\ol{F}}^3$. Indeed,
$
Z_0 + Y_0 = \begin{pmatrix} \al + u & \beta + v \\ \gam + w & - (\al + u) \end{pmatrix},
$
hence $Q(Z_0 + Y_0) = (\al+u)^2 + (\beta + v) (\gam + w)$ and 
\begin{align*}
\ip{Z_0, Y_0} &= Q(Z_0 + Y_0) - Q(Z_0) - Q(Y_0) \\
&= (\al + u)^2 + (\beta + v) (\gam + w) - (\al^2 + \beta \gam) - (u^2 + vw) \\
&= 2 \al u + \beta w + \gam v.
\end{align*}
\end{proof}

Let us continue the notation introduced above. Let $Z_0 = \begin{pmatrix} \al & \beta  \\ \gamma  & - \alpha \end{pmatrix} \in \sl_2(q)$ be as in Lemma \ref{further_char}. Since $Z_0$ is nonzero (here we are using that $z$ is nonscalar so that $Z_0$ is nonzero), at least one of $\al, \beta$ or $\gam$ is nonzero. Suppose $\beta$ is nonzero; the proof is similar in the cases where $\al \neq 0$ or $\gamma \neq 0$. Then we can solve the linear equation for $w$:
$$
w = \frac{b  - 2 \al u  - \gam v}{\beta}
$$
Substituting into the quadratic equation, we get
$$
u^2 + v \left( \frac{b- 2 \al u - \gam v}{ \beta} \right) = s
$$
so that we obtain a quadratic equation in two variables $u$ and $v$:
$$
\beta u^2  - 2 \al u v  - \gam v^2 + b v - s \beta = 0.
$$
The discriminant of this quadratic equation in $u$ is 
$$
\Delta(v) = (-2 \al v)^2 - 4 \beta (- \gam v^2 + b v - s \beta) = 4 ( (\al^2 + \beta \gam) v^2 - \beta b v + s \beta^2).
$$
Therefore the number of solutions $u \in F$ for each $v \in F$ is given by 
$$
1 + \chi( (\al^2 + \beta \gam) v^2  - \beta bv + s\beta^2)
$$
Therefore 
\begin{align}\label{N(z)exp}
N_{X, Y}(z) &= \sum_{v \in F} \left[ 1 + \chi( (\al^2 + \beta \gam) v^2  - \beta bv + s\beta^2) \right] \nonumber \\
&= q + \sum_{v \in F}  \chi( (\al^2 + \beta \gam) v^2  - \beta bv + s\beta^2)
\end{align}

Let $P(v) := (\al^2 + \beta \gam) v^2  - \beta bv + s\beta^2$, and first suppose that $Q(Z_0) = \al^2 + \beta \gam = 0$. Then $b$ which was defined to $Q(Z_0) + s -r$, in Lemma \ref{further_char} is equal to $s - r$, and $P(v) = - \beta(s-r)v+ s \beta^2$, which is a linear polynomial in $v$, linear because $\beta \neq 0$ and $s-r \neq 0$; recall $s$ is a square and $r$ is not a square. Therefore $v \mapsto P(v)$ is a bijection of $F$, so 
$$
\sum_{v \in F} \chi(P(v)) = \sum_{v \in F} \chi(v) = 0.
$$ 
Hence, it follows from Equation \ref{N(z)exp}, $N_{X, Y}(z) = q$ in this case. On the other hand,  suppose now that $Q(Z_0) = \al^2 + \beta \gam \neq 0$ so that $P(v)$ is a quadratic polynomial. The discriminant of $P(v)$ is given by  
\begin{align*}
\Delta(P(v)) &= (-\beta b)^2 - 4 ( \al^2 + \beta \gam) (s \beta^2) \\
& = \beta^2(b^2 - 4s Q(Z_0)).
\end{align*} 
 Since $\beta$ is nonzero by assumption, $\Delta(P(v)) = 0$ if and only if $b^2 = 4s Q(Z_0)$ equivalently $(Q(Z_0) + s - r)^2 = 4s Q(Z_0)$. Rearranging the terms we obtain the following equation:
$$
Q(Z_0)^2 - 2 (r + s) Q(Z_0) + (s-r)^2 = 0.
$$
But the discriminant of this equation is $4(r+s)^2 - 4 (s-r)^2 = 16rs$, which is not a square in $F$ since $r$ is not a square and $s$ is a square by Lemma \ref{characterization_X_Y}. Therefore, the equation has no solutions in $F$, i.e. for all nonzero $Z_0$ chosen as above we have $\Delta(P(v)) \neq 0$, whence $P(v)$ is not a square of a linear term. In this case, we have the following result. 

\begin{prop}\label{P(v)_expression}
Let $P(v) = av^2 + b v + c$, $a \neq 0$ be a polynomial in $v$ over $F$ with nonzero discriminant. Then 
$$
\sum_{v \in F} \chi(P(v)) = - \chi(a).
$$
In particular, $\abs{\sum_{v \in F} \chi(P(v))} \leq 1$.
\end{prop}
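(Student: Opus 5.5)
To prove Proposition \ref{P(v)_expression}, I would use the standard reduction to counting points on a conic. Since $q$ is odd in this subsection and $a \neq 0$, complete the square:
$$
4a P(v) = (2av + b)^2 - D, \qquad D := b^2 - 4ac \neq 0 .
$$
Since $\chi$ is multiplicative, $\chi(4a)\chi(P(v)) = \chi((2av+b)^2 - D)$. The substitution $w = 2av + b$ is a bijection of $F$, and $\chi(4a) = \chi(a)$ with $\chi(a)^2 = 1$. Hence
$$
\sum_{v \in F} \chi(P(v)) = \chi(a) \sum_{w \in F} \chi(w^2 - D).
$$
It therefore suffices to show that $S(D) := \sum_{w \in F}\chi(w^2 - D) = -1$ for every nonzero $D$.

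To evaluate $S(D)$, I would count solutions of $w^2 - u^2 = D$ in $F^2$ in two ways. For fixed $w$, the number of $u \in F$ with $u^2 = w^2 - D$ is $1 + \chi(w^2 - D)$. So the total number of solutions is $q + S(D)$. On the other hand, since $q$ is odd, the change of variables $(w,u) \mapsto (w-u, w+u)$ is invertible on $F^2$. It turns the equation into $st = D$, which has exactly $q-1$ solutions because $D \neq 0$. Comparing the two counts gives $q + S(D) = q-1$, so $S(D) = -1$. Substituting back yields $\sum_{v}\chi(P(v)) = -\chi(a)$. The bound $\abs{\sum_{v \in F}\chi(P(v))} \leq 1$ then follows immediately, since $\chi(a) = \pm 1$.

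The only point needing care is the hypothesis of odd characteristic. It is used twice: to divide by $2a$ when completing the square, and to invert the change of variables $(w,u)\mapsto(w-u,w+u)$. Both are valid since we are in the subsection where $\mathrm{char}\, F \neq 2$. The nonvanishing of $D$ is exactly what makes the hyperbola $st = D$ have $q-1$ points rather than $2q-1$. I do not expect any real obstacle beyond keeping track of these facts.
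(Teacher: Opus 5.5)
Your proof is correct and follows essentially the same route as the paper: complete the square to reduce to $\sum_{w \in F}\chi(w^2 - D) = -1$, then count the solutions of $w^2 - u^2 = D$ in two ways, using the invertible substitution to $st = D$, which has $q-1$ solutions. The only cosmetic difference is that you multiply through by $4a$ before completing the square, whereas the paper factors out $a$ and writes $\chi(au^2+d) = \chi(a)\chi(u^2 + d/a)$.
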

\begin{proof}
Let $P(v) = av^2 + b v + c$, $a \neq 0$. Completing the square, we have 
$$
P(v) = a \left( v + \frac{b}{2a} \right)^2 + \left( c - \frac{b^2}{4a} \right).
$$
Changing variables let $u := v + \frac{b}{2a}$, and $d = \left( c- \frac{b^2}{4a} \right)$, we get
$$
\sum_{v \in F} \chi(P(v)) = \sum_{u \in F} \chi(au^2 +d)
$$
Since the discriminant is nonzero, $d \neq 0$, so $\chi(au^2 +d) = \chi(a) \chi(u^2 + d/a)$. Letting $\mu = - d/a$,  it suffices to prove that 
$$
\sum_{u \in F} \chi(u^2 - \mu) = -1.
$$

Let $N = \# \Set{(u,t) \in F^2 : t^2 = u^2 - \mu}$. As before, for each fixed $u$, the equation $t^2 = u^2 - \mu$ has $1 + \chi(u^2 - \mu)$ solutions in $t$. Therefore 
$$
N = \sum_{u \in F} (1 + \chi(u^2 - \mu)) = q + \sum_{u \in F} \chi(u^2 - \mu).
$$

On the other hand, let us compute $N$ in another way. Rewrite the defining equation of $N$ above as $t^2 = u^2 - \mu$ as $u^2 - t^2 = \mu$ so that by changing variables $A := u-t$ and $B := u+t$, we have an equation $AB = \mu$. Since $\mu \neq 0$, we must have $A \neq 0$, so that given $A \neq 0$,  $B$ is uniquely determined, whence there are exactly $q-1$ solutions, hence $N = q-1$. Substituting this value for the expression of $N$ above, we obtain the desired result.
\end{proof}

We now prove Theorem \ref{main_theorem_n=2} in the odd characteristic case.

\begin{proof}
The result follows from  Equation \eqref{N(z)exp} and Proposition \ref{P(v)_expression} if $z$ is not a scalar matrix.
\end{proof}

\subsection{Characteristic $2$ case}

Let $q$ be even. The decomposition into sum of a scalar matrix and a traceless matrix  in the odd characteristic case does not apply here. So we work with trace-determinant equations directly. Recall the notation. Let $X$ be a conjugacy class in $M_2(F)$ whose characteristic polynomial $\chi_X(T) := T^2 + \al T + \beta \in F[T]$ is irreducible. Clearly, $\alpha \neq 0$, otherwise $\chi_X(T)$ is reducible, since every element in $F$ is a square. Let $Y$ be a conjugacy class whose characteristic polynomial $\chi_Y(T) := T^2+ \gamma T + \delta$ is a product of distinct linear factors. Since $\chi_Y(T)$ splits into distinct factors it follows that $\gamma$ is nonzero too.

Let $z \in M_2(F)$ be a nonscalar matrix with trace $\al + \gam $. Then 
\begin{align*}
N_{X, Y}(z) &= \# \Set{(x,y) \in X \times Y : x + y = z} \\
&= \# \Set{y \in Y : z + y \in X} \\
&= \# \Set{y \in M_2(F) : \tr(y) = \gam, \; \det(y) = \delta, \; \tr(z+y) = \alpha, \; \det(z + y) = \beta}\\
&= \# \Set{y \in M_2(F) : \tr(y) = \gam, \; \det(y) = \delta, \; \det(z+y) = \beta}.
\end{align*}
The last equality follows from the fact that the trace condition $\tr(z +y) = \alpha$ on $X$ follows from the condition $\tr(y) = \gam$: Indeed, since $\tr(z) = \al + \gam $ we have $\tr(z+y) = \tr(z) + \tr(y) = (\al + \gam) + \gam = \al$.

Now let us consider the first condition. Let $E = \Set{y \in M_2(F) : \tr(y) = \gam}$, and $y = \begin{pmatrix} u & v \\ w & d \end{pmatrix} \in E$. Then $u + d = \gam$ so that $d = \gam + u$, and $y$ is  of the form $y = \begin{pmatrix} u & v \\ w & u + \gam \end{pmatrix}$. Therefore  we may identify $E$ with the affine space $\A_{F}^3$ with the coordinates $(u,v,w)$ 

Now consider the affine quadric surface $S$ defined by the condition $\det(y) = \delta$; clearly the condition is equivalent to the polynomial condition $u^2 + \gam u + v w + \delta = 0$ in the affine space $\A^3$ (identified with $E$). Thus the conjugacy class $Y$ corresponds to the surface $S$ in $\A^3_{F}$ defined by $u^2 + \gam u + v w + \delta = 0$.

Now consider the affine plane $\Pi_z$ defined by the condition $\det(z+y) = \beta$. Let $z := \begin{pmatrix} p & r \\ s & p + \theta \end{pmatrix}$, where $\theta = \tr(z) = \al + \gam$. As in the odd characteristic case, consider the quadratic form $Q : M_2(F) \to F$ defined by $Q(a) := \det(a)$, and let 
$$
B(a,h) = \det(a+h) + \det(a) + \det(h)
$$
be the associated bilinear form.
If $a = \begin{pmatrix} a_1 & a_2 \\ a_3 & a_4 \end{pmatrix}$ and $h = \begin{pmatrix} h_1 & h_2 \\ h_3 & h_4 \end{pmatrix}$ then 
\begin{equation}\label{B_equation}
B(a,h) = a_1 h_4 + a_4 h_1 + a_2 h_3 +a_3 h_2.
\end{equation} 
In our case, since $\det(z+y) = \det(z) + \det(y) + B(z,y)$ the condition $\det(z+y) = \beta$ is equivalent to $B(z,y) = \beta + \det(z) + \delta$; recall from the previous paragraph that $\det(y) = \delta$. Let 
$
k_0 := \beta + \det(z) + \delta.
$
Thus, it follows from Equation \eqref{B_equation} that $B(z,y) = p(u + \gam) + (p + \theta) u + r w + s v = \theta u + sv + rw + p \gam$. Therefore the condition $B(z,y) = k_0$ is equivalent to the condition 
$$
\theta u + s v + r w + p \gam + k_0 = 0.
$$
Let $k = p \gam + k_0$. Then consider the intersection  $C_{\trm{aff}} = S \cap \Pi_z$ in $A_{\ol{F}}^3$. It is given by 
$$
C_{\trm{aff}} = \Set{(u,v,w) \in \A^3_{\ol{\F}_q} : \begin{array}{l} u^2 + \gam u + v w + \delta = 0, \\ \theta u + s v + r w + k = 0 \end{array}}
$$
By construction, $N_{X, Y}(z) = \#C_{\trm{aff}}(F)$, the $F$-rational points of the intersection which is a conic $C_{\aff}$. Now we show that $C_{\aff}$ is a nondegenerate conic, by viewing inside the projective space. It is well known that any nondegenerate \tit{projective} conic has exactly $q+1$ rational points. The line at infinity meets this conic in at most $2$ points. Therefore, its affine restriction $C_{\aff}$ has at least $q-1$ points and at most $q+1$ points, and the result follows.

\vspace{3mm}
Let $\P^3_{\ol{F}}$ be the projective space with $\A^3_{\ol{F}}$ embedded as $(u,v,w) \mapsto [u:v:w:1]$. Homogenezing the defining equation of the quadratic surface $S$, the  projective closure $\ol{S}$ of $S$ in $\P^3_{\ol{F}}$ is given by 
$$
f(u,v,w,t) = u^2 + \gam u t + v w  + \delta t^2 = 0.
$$
Similarly homogenizing the definition equation of the affine plane $\Pi_z$ we get the corresponding projective plane in $\P^3_{\ol{F}}$ defined by the equation
$$
l(u,v,w,t) = \theta u + s v + r w + kt = 0.
$$
Then the projective closure of the affine conic $C_z$ is  $\ol{C_z} := \ol{S} \cap \ol{\Pi_z} \subset \P^3_{\ol{F}}$, and we can recover the affine conic $C_{\trm{aff}}$ as $C_{\trm{aff}} = \ol{C_z} \cap \Set{ t \neq 0}$.

We first prove that $\ol{S}$ is smooth. We have $\frac{\partial f}{\partial u} = \gam t$, $\frac{\partial f}{\partial v} = w$, and $\frac{\partial f}{\partial w} = v$ and $\frac{\partial f}{\partial t} = \gam u$. Because $\gam \neq 0$ (being the trace of the conjugacy class $Y$ whose characteristic polynomial is a product of distinct linear factors), it follows that the partial derivatives vanish only if $t = 0, w = 0, v= 0, u = 0$. Hence $\ol{S}$ is smooth.

The curve $\ol{C_z} = \ol{S} \cap \ol{\Pi_z}$ is a plane conic inside the projective plane $\ol{\Pi_z}$; it is nonempty because any quadratic form in at least three variables over $F$ has a nontrivial zero. Recall that a projective plane conic is degenerate if and only if it is singular. Since $\ol{S}$ is smooth, the conic $\ol{C_z}$ is singular if and only if $\ol{\Pi_z}$ is tangent to $\ol{S}$ at some point. This means there is a point $P = [u:v:w :t] \in \P^3$ such that $f(P) = 0$ and $l(P) = 0$ and the gradient $\nabla f$  of $f$ is proportional to the gradient $\nabla l$ of $l$. Since $\nabla f = (\gam t, w, v, \gam u)$ and $\nabla l = (\theta, s, r, k)$, it follows that the point $P$ is singular if there exists a $\lam \in \ol{F}^{\times}$ such that $\gam t = \lam \theta, w = \lam s, v = \lam r, \gam u = \lam k$. Since $\gam$ is nonzero, we have
$$
[u:v : w : t] = [\gam^{-1} \lam k : \lam r : \lam s : \gam^{-1} \lam \theta ] = [k:\gam r, \gam s : \theta]
$$
Clearly $l(k,\gam r, \gam s, \theta) = \theta k + s(\gam r) + r (\gam s) + k \theta = 0$, so it remains to verify that this point is on $\ol{S}$ to prove that the point is singular.  Substituting into $F$, we get
$$
f(k, \gam r, \gam s, \theta) = k^2 + \gam k \theta + \gam^2 r s + \delta \theta^2
$$
so that the point $[u:v:w:t]$ is singular if and only if $k^2 + \gam \theta k + \gam^2 r s + \delta \theta^2 = 0$. We now show that this never happens. First suppose that $\theta \neq 0$, so that the point is a finite point (because $\gam t = \lam \theta \implies t = \gam^{-1} \lam \theta$). Then it corresponds to the affine point $(u,v,w) = \left(\frac{k}{\theta}, \frac{\gam r} {\theta}, \frac{\gam s} { \theta} \right)$ in $C_{\aff}$. At a singular point of the affine conic, the gradients of the two affine equations 
$$
u^2 + \gam u + v w + \delta = 0
$$
and
$$
\theta u + s v + r w + k = 0
$$
are proportional. The gradient of the quadric equation is $(\gam, w, v)$ and that of the plane equation is $(\theta, s, r)$. Thus proportionality means $(\gam, w, v)$ and $(\theta, s, r)$ are proportional. This is equivalent to the linear forms $h \mapsto B(y,h)$ and $h \mapsto B(z,h)$ are proportional on $\sl_2(q)$. Indeed for $h = \begin{pmatrix} h_1 & h_2 \\ h_3 & h_1 \end{pmatrix} \in \sl_2(q)$, we have $B(y,h) = \gam h_1 + w h_2 + v h_3$	 and $B(z, h) = \theta h_1 + s h_2 + r h_3$, so that the proportionality of gradients is equivalent to the proportionality of the linear forms.  Hence there exists $\rho \in F$ such that $B(z,h) = \rho B(y,h)$ for all $h \in \sl_2(q)$. Equivalently, $B(z + \rho y, h) = 0$ for all trace-zero $h$.

\begin{lem}
$B(a,h) = 0$ for all trace-zero $h$ if and only if $a$ is a scalar.
\end{lem}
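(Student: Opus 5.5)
We argue directly from the coordinate formula \eqref{B_equation}. Here we are in characteristic $2$, and "trace-zero" $h$ means $h = \begin{pmatrix} h_1 & h_2 \\ h_3 & h_4 \end{pmatrix}$ with $h_4 = h_1$ (since $-h_1 = h_1$). For $a = \begin{pmatrix} a_1 & a_2 \\ a_3 & a_4 \end{pmatrix}$, \eqref{B_equation} becomes
$$
B(a,h) = (a_1 + a_4) h_1 + a_3 h_2 + a_2 h_3 ,
$$
which is exactly the form already written down in the preceding paragraph. The free variables $h_1, h_2, h_3$ range over $F$, so the statement reduces to reading off when this linear form vanishes identically.

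For the \emph{if} direction, suppose $a = c\,\id$. Then $a_2 = a_3 = 0$ and $a_1 + a_4 = 2c = 0$ in characteristic $2$, so every coefficient is zero and $B(a,h) = 0$ for all trace-zero $h$.

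For the \emph{only if} direction, assume $B(a,h) = 0$ for all such $h$. We test against the three elementary trace-zero matrices:
\begin{itemize}
\item $h = \id$ (which is trace-zero here) gives $a_1 + a_4 = 0$, i.e.\ $a_1 = a_4$;
\item $h = E_{12}$ gives $a_3 = 0$;
\item $h = E_{21}$ gives $a_2 = 0$.
\end{itemize}
Hence $a = a_1 \id$ is scalar.

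There is no real obstacle. The only point needing care is that "trace-zero" and the identity $-1 = 1$ are both characteristic-$2$ phenomena: $\id$ itself is trace-zero, and this is what forces $a_1 = a_4$. Once the lemma is proved, we apply it to $a = z + \rho y$. This makes $z + \rho y$ scalar, hence $z = \rho y + c\,\id$, and we will then show that this contradicts either $z$ nonscalar or the constraints $f(P) = 0$, $l(P) = 0$.
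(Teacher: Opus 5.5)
Your proof is correct and follows the paper's argument: both reduce to the linear form $(a_1+a_4)h_1 + a_3h_2 + a_2h_3$ on trace-zero $h$ and read off that all three coefficients must vanish. Your test matrices $\id$, $E_{12}$, $E_{21}$ simply make explicit the step the paper leaves implicit, and the closing remark about $z+\rho y$ lies outside this lemma and is not needed for it.
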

\begin{proof}
Indeed, if $a = \begin{pmatrix} a_1 & a_2 \\ a_3 & a_4 \end{pmatrix}$ then $B(a,h) = (a_1 + a_4)h_1 + a_2 h_3 + a_3 h_2$. If $B(a,h) = 0$ for all $h$, then $a_1 + a_4 = 0, a_2, a_3 =0$ so that $a_1 = a_4$. Thus $a$ is a scalar. The converse is clear. 
\end{proof}

Therefore $z + \rho y = \mu I$ for some $\mu \in F$.  Hence $z + y = \mu I + (\rho + 1) y$. But $y \in Y$ is diagonalizable over $F$ with distinct eigenvalues. Therefore, if $\rho + 1 = 0$, then $z + y$ is a scalar, and if $\rho +1 \neq 0$ then $z + y$ is split over $F$. In either case, $z+y$ cannot have irreducible characteristic polynomial, contrary to the hypothesis that  $z + y \in X$ and $X$ has irreducible characteristic polynomial.
Thus $\ol{C_z}$ cannot be singular when $\theta \neq 0$.

Now assume that $\theta = 0$, so that the presumably singular point is a point at infinity. Since $\theta = \al + \gam$ we have $\al = \gam$, and the possible singular point now is $[k: \gam r : \gam s : 0]$. As before, if it is singular, it must be a point of $\ol{S}$ so that substituting in the defining equation of $\ol{S}$ we get
$$
k^2 + \gam^2 rs = 0
$$
Equivalently, $(k/\gam)^2 = rs$. Set $\eta = \frac{k}{\gam}$ so that $\eta^2 = rs$. Let $\mu := p  + \eta$. Since $\theta = 0$, we have $z = \begin{pmatrix} p & r  \\ s & p \end{pmatrix}$ so that $\det(z) = p^2 + rs = p^2 + \eta^2 = (p + \eta)^2  = \mu^2$. Now, recall $k = p \gam + k_0$. Since $k = \gam \eta$, this gives 
$
k_0 = p \gam + k = \gam (p + \eta) = \gam \mu.
$
But $k_0 = \beta + \det(z) + \delta$, therefore $\gam \mu = \beta + \mu^2 + \delta$. So 
$$
\mu^2 + \gam \mu + \beta + \delta = 0 \implies \left( \frac{ \mu}{ \gam } \right)^2 + \left( \frac{\mu}{\gam} \right) + \frac{\beta + \delta}{\gam^2} = 0.
$$
Hence $\frac{\beta + \delta}{\gam^2}$ lies in the image of the Artin-Schreier map $a \mapsto a^2 + a$. Over $F$, the image of this map is exactly the elements with absolute trace $0$:
$$
\trm{Im}(a \mapsto a^2 + a) = \Set{ b \in F : \tr_{F/\F_2}(b) = 0 }
$$
Therefore 
$$
\Tr_{F/\F_2} \left( \frac{\beta + \delta} {\gam^2} \right) = 0.
$$
But $\al = \gam$, so 
$
\frac{\beta + \delta}{\gam^2} = \frac{\beta}{\al^2} + \frac{\delta}{\gam^2}.
$
Now recall that for $a \neq 0$ the polynomial $t^2 + at + b$ has a root in $F$ if and only if $\tr_{F/\F_2} \left( \frac{b}{a^2} \right) = 0$. Therefore $t^2 + \al t + \beta$ is irreducible implies $\tr_{F/\F_2} \left( \frac{\beta}{\al^2} \right) = 1$, and since $t^2 + \gam t + \delta$ split with distinct roots, we have $\tr_{F/\F_2} \left( \frac{\delta}{\gam^2} \right) =0$. Thus 
$$
\tr_{F/\F_2} \left( \frac{\beta + \delta}{\gam^2} \right) = 1 + 0 = 1
$$
This contradicts the conclusion that the trace is $0$. Therefore $\ol{C_z}$ cannot be singular even when $\theta = 0$. 

We have shown that $\ol{C_z}$ is nondegenerate projective conic over $F$.  Since the conic has an $F$-rational point, and since smooth projective conic over $F$ with an $F$-rational point is $F$-isomorphic to the projective line $\P^1_{\ol{F}}$, the curve $\ol{C_z}$ has exactly $(q+1)$ $F$-rational points. Thus $\# \ol{C_z}(F) = q + 1$, so that its affine restriction is obtained by removing the points at infinity:
$$
C_{\aff} = \ol{C_z} \setminus ( \ol{C_z} \cap \Set{t = 0})
$$
The hyperplane $t = 0$ intersects the plane $\ol{\Pi_z}$ in a line, and a line meets a nondegenerate conic in at most $2$ points. Hence 
$
0 \leq \#(\ol{C_z} \cap \Set{t = 0})(F) \leq 2.
$
Therefore
$$
q + 1 - 2 \leq \#C_{\aff}(F) \leq q+1
$$
so that $N_{X, Y}(z)= \#C_{\aff}(F)$ satisfies the desired inequality $\abs{N_{X, Y}(z) - q} \leq 1$. This completes the proof of Theorem \ref{main_theorem_n=2}.

\section{General Case}

Let  $n \geq 2$. Let $X$ be a conjugacy class in $M_n(F)$ whose characteristic polynomial is irreducible of degree $n$, and $Y$ be a conjugacy class in $M_n(F)$ whose characteristic polynomial is of the form $(T - \lambda)h(T)$, where $\lambda \in F$ and $h(T)$ is an irreducible polynomial over $F$ of degree $n-1$ and $h(\lambda) \neq 0$ ; so when $n = 2$ the linear factors are distinct. Recall the notation \S $2$.

\begin{lem}\label{conjugacy_characteristic_equivalence}
For $x,y \in M_n(F)$, we have
\begin{align*}
x \in X & \trm{ if and only if } \chi_x(T) = \chi_X(T), \\
y \in Y & \trm{ if and only if } \chi_y(T) = \chi_Y(T).
\end{align*}
Furthermore, under the natural action of $M_n(F)$ on the vector space $V$,  $x \in X$ acts irreducibly on $V$, i.e. there is no proper nonzero $x$-invariant subspace of $V$.
\end{lem}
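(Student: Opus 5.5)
The plan is to use the standard fact that two matrices over a field are conjugate if and only if they have the same rational canonical form, i.e. the same invariant factors. The idea is to show that in both cases the characteristic polynomial forces the invariant factors, because each characteristic polynomial is squarefree.

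First take $x$. Conjugate matrices have equal characteristic polynomials, so $x \in X$ implies $\chi_x = \chi_X$. For the converse, suppose $\chi_x = \chi_X$, which is irreducible of degree $n$. The minimal polynomial $m_x$ divides $\chi_x$ and is nonconstant, so irreducibility forces $m_x = \chi_x$. Hence $x$ is cyclic, and it is conjugate to the companion matrix of $\chi_X$. The same holds for any element $x_0 \in X$, so $x$ is conjugate to $x_0$ and therefore lies in $X$.

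For $y$ the argument is the same in outline. The polynomial $\chi_Y = (T-\lambda)h(T)$ is a product of two distinct irreducible factors. They are distinct because $h(\lambda)\neq 0$, and for $n=2$ this says the two linear factors differ. So $\chi_Y$ is squarefree. The minimal polynomial $m_y$ has the same irreducible factors as $\chi_y$, since every root of the characteristic polynomial is a root of the minimal polynomial. Together with $m_y \mid \chi_y$ and squarefreeness, this gives $m_y = \chi_y$. Thus $y$ is again conjugate to the companion matrix of $\chi_Y$. Alternatively, $V \cong F[T]/(T-\lambda) \oplus F[T]/(h)$ as $F[T]$-modules, by the primary decomposition.

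For irreducibility of the action of $x$, let $U \subset V$ be a nonzero proper $x$-invariant subspace. Then $\chi_{x|_U}$ divides $\chi_x$, because in a basis adapted to $U$ the matrix of $x$ is block upper triangular. But $\chi_{x|_U}$ has degree $\dim U$ with $0<\dim U<n$. This is impossible, since $\chi_x$ is irreducible of degree $n$.

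I do not expect a real obstacle. The only point needing care is the claim $m_y=\chi_y$: it uses that the minimal and characteristic polynomials share the same irreducible factors. This follows, for instance, from Cayley--Hamilton together with the fact that $\chi$ divides a power of $m$, as seen from the invariant factor decomposition.
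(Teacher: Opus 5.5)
Your proposal is correct and follows essentially the same route as the paper: in both cases the squarefree (or irreducible) characteristic polynomial forces the minimal polynomial to equal it, so the matrix is conjugate to the companion matrix. Irreducibility of the action then follows by factoring $\chi_x$ through a block upper triangular form adapted to an invariant subspace. Your added justification that $m_y$ and $\chi_y$ share the same irreducible factors fills in a step the paper only asserts.
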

\begin{proof}
For both equivalences, the forward direction is clear. For the other direction, let $x' \in M_n(F)$ with characteristic polynomial equal to $\chi_X(T)$. Since $\chi_X(T)$ is irreducible, the minimal polynomial of $x'$ is also $\chi_X(T)$. By the theory of rational canonical forms, it follows that $x'$ is conjugate to the companion matrix of $\chi_X(T)$. Likewise, any $x \in X$ is conjugate to the companion matrix of $\chi_X(T)$. It follows that $x$ and $x'$ are conjugate, hence $x'$ belongs to $X$.

Let $y' \in M_n(F)$ with characteristic polynomial $\chi_{y'}(T) = \chi_Y(T)$. Since $\chi_Y(T)$ is a product of relatively prime irreducible factors, it follows that the minimal polynomial of $y'$ is also equal to $\chi_Y(T)$, so that $y'$ is conjugate to the companion matrix of $\chi_Y(T)$. As above, any $y \in Y$ is conjugate to companion matrix of $\chi_Y(T)$, and therefore $y$ and $y'$ are conjugate.

For the final assertion, we simply note that if $W$ is a nonzero proper $x$- invariant subspace of $V$ then the characteristic polynomial $\chi_x(T) = \chi_X(T)$ is the product of nonconstant polynomials $\chi_W(T)$ and $\chi_{V/W}(T)$ contradicting to the irreducibility of $\chi_X(T)$.
\end{proof}
Let $z$ be a nonscalar matrix in $M_n(F)$ with matching trace. We have $\chi_Y(T) = (T- \lam) h(T)$, with $h(T) \in F[T]$ irreducible and degree $m := n-1$ and $h(\lam) \neq 0$. See notation \S $2$. Let us denote by $L_y \sub V$ the $\lam$-eigenline fixed by $y$, and $W$ be a fixed complement of $L_y$ in $V$, so that $V = L_y \oplus W$ with $\dim W = m$. The matrix of $y$ is of the form 
$
\begin{pmatrix} \lam & r  \\ 0 & B \end{pmatrix},
$
where $r \in W^*, B \in \End(W)$; see notation \S $2$. Thus the characteristic polynomial of $y$ is $\chi_y(T) = \chi_Y(T) = (T- \lam) \chi_B(T)$, and it follows from the equivalence in Lemma \ref{conjugacy_characteristic_equivalence} that $y \in Y$ if and only if $\chi_B(T) = h(T)$. Let 
$$
\CalC_h = \Set{B \in \End_{F}(W) : \chi_B(T) = h(T)}
$$
Since $h$ is irreducible of degree $m$, again by Lemma \ref{conjugacy_characteristic_equivalence}, it follows that  $\CalC_h$ is a single $GL(W)$-conjugacy class in $\End_{F}(W)$.

\begin{lem}\label{charY}
Fix $y \in Y$. Then, for $y' \in M_n(F)$,  
$$
y' \in Y \trm{ such that } L_{y'} = L_y \trm{ if and only if } y' = \begin{pmatrix} \lam & r \\ 0 & B \end{pmatrix} \trm{ for some } r \in W^*, B \in \CalC_h.
$$
\end{lem}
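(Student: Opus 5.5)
We prove the two implications separately, working throughout with the fixed decomposition $V = L_y \oplus W$ and the block notation of \S 2. The only earlier inputs are Lemma \ref{conjugacy_characteristic_equivalence} and the hypothesis $h(\lam) \neq 0$.

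\emph{Forward direction.} Let $y' \in Y$ with $L_{y'} = L_y$. Since $L_y$ is a $\lam$-eigenline of $y'$, the operator $y'$ maps $L_y$ into itself and acts on it by the scalar $\lam$. Hence, relative to $V = L_y \oplus W$, the first column of $y'$ is $(\lam, 0)^T$. So $y'$ has the form $\begin{pmatrix} \lam & r \\ 0 & B \end{pmatrix}$ with $r \in W^*$ and $B \in \End_F(W)$.

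\begin{itemize}
\item Expanding along the first column, $\chi_{y'}(T) = (T-\lam)\chi_B(T)$.
\item By Lemma \ref{conjugacy_characteristic_equivalence}, $\chi_{y'}(T) = \chi_Y(T) = (T-\lam)h(T)$.
\item Cancelling $T-\lam$ in the domain $F[T]$ gives $\chi_B = h$, that is, $B \in \CalC_h$.
\end{itemize}

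\emph{Reverse direction.} Let $y' = \begin{pmatrix} \lam & r \\ 0 & B \end{pmatrix}$ with $B \in \CalC_h$.

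\begin{itemize}
\item The same block computation gives $\chi_{y'}(T) = (T-\lam)h(T) = \chi_Y(T)$, so $y' \in Y$ by Lemma \ref{conjugacy_characteristic_equivalence}.
\item Because the lower-left block vanishes, $y'$ maps $L_y$ into itself and acts on it by $\lam$. So $L_y$ lies in the $\lam$-eigenspace of $y'$.
\item Since $h(\lam) \neq 0$, $\lam$ is a simple root of $\chi_{y'}$. Therefore the $\lam$-eigenspace of $y'$ is one-dimensional, and it equals $L_y$. This gives $L_{y'} = L_y$.
\end{itemize}

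The one step needing care is this last uniqueness of the eigenline. It is exactly where the hypothesis $h(\lam) \neq 0$ is used: the eigenspace's dimension is bounded by the algebraic multiplicity of $\lam$, which is $1$. Everything else is a routine block-triangular determinant computation.
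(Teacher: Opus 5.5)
Your proof is correct and follows essentially the same route as the paper: the block-triangular form gives $\chi_{y'}(T)=(T-\lam)\chi_B(T)$, Lemma \ref{conjugacy_characteristic_equivalence} turns this into membership in $Y$, and the hypothesis $h(\lam)\neq 0$ is what pins down the eigenline. The differences are cosmetic. You write out the forward direction, which the paper leaves to the discussion before the lemma. For the uniqueness of the eigenline you invoke geometric $\le$ algebraic multiplicity, whereas the paper shows directly that $(y'-\lam \id)(\al e+w)=0$ forces $(B-\lam \id)w=0$, hence $w=0$ because $B-\lam \id$ is invertible.
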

\begin{proof}
Based on the discussion preceding this lemma, it remains to prove that if we choose $B \in C_h$ and $r \in W^*$ and define $y' := \begin{pmatrix} \lam & r \\ 0 & B \end{pmatrix}$ then $y' \in Y$ such that $L_{y'} = L_y$. First, because $B \in \CalC_h$, we have $\chi_B(T)= h(T)$ so that $\chi_{y'}(T) = (T- \lam) \chi_B(T) = (T - \lam) h(T) = \chi_Y(T)$, so that by Lemma \ref{conjugacy_characteristic_equivalence} it follows that $y' \in Y$. It remains to check that the $\lam$-eigenline $L_{y'}$ of $y'$ is $L_y$. From the decomposition $V = L_y \oplus W$, let $L_y = \ip{e}$, and $\al e + w \in V$ be an eigenvector of $y'$, where $w \in W$. By block multiplication, 
$$	
y'(\al e + w	) = \begin{pmatrix} \lam & r \\ 0 & B \end{pmatrix} \begin{pmatrix} \al e \\ w \end{pmatrix} = \begin{pmatrix} \lam \al e + r(w)e \\ B(w) \end{pmatrix}
$$
it follows that 
$$
y'(\al e + w ) = \lam \al e + r(w) e + B(w)
$$
so that 
$$
(y' - \lam I)(\al e + w) = r(w) e + (B - \lam I)w.
$$
Since $\al e + w $ is an eigenvector of $y'$ we have $r(w) = 0$ and $(B- \lam I) w = 0$. But $B$ has characteristic polynomial $h(T)$ and by assumption $h(\lam) \neq 0$, so  $\lam$ is not an eigenvalue of $B$. Therefore $B - \lam I$ is invertible, so that $w = 0$, hence $r(w) = 0$. It follows that $L_{y'} = \Ker(y' - \lam I) = \Set{ \al e : \al \in F} = L_y$ as desired.
\end{proof}
Recall the following standard result: 
\begin{lem}\label{schur}
Let $R$ be a commutative ring, let $M \in M_m(R)$, let $\alpha \in R$ be a scalar, $u \in M_{1,m}(R)$, and $v \in M_{m,1}(R)$. Then 
$$
\det \begin{pmatrix} \al & u \\ v & M \end{pmatrix} = \al \det(M) - u \;  \adj(M) \; v.
$$
\end{lem}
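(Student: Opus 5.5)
The plan is to reduce the block determinant to an ordinary determinant by an invertibility trick, and then remove the invertibility assumption by a polynomial-identity argument.

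\textbf{Invertible case.} First assume $\det(M)$ is a unit of $R$. Then
\[
\begin{pmatrix} \al & u \\ v & M \end{pmatrix} = \begin{pmatrix} 1 & u M^{-1} \\ 0 & \id_m \end{pmatrix} \begin{pmatrix} \al - u M^{-1} v & 0 \\ v & M \end{pmatrix},
\]
which is checked directly by block multiplication. Taking determinants, the first factor is unitriangular and the second is block lower triangular. This gives $\det(M)\,(\al - u M^{-1}v)$. Now substitute $M^{-1} = \det(M)^{-1}\adj(M)$ to get $\al\det(M) - u\,\adj(M)\,v$.

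\textbf{General case.} Both sides of the claimed identity are polynomials with integer coefficients in the entries $\al, u_i, v_j, M_{ij}$; this uses that $\adj(M)$ has entries that are polynomial cofactors. It therefore suffices to prove the identity in the universal ring $\mathbb{Z}[\al, u_i, v_j, M_{ij}]$ and then specialize to $R$ via the evaluation homomorphism.

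The universal ring is an integral domain. Pass to its fraction field, where $\det(M)$, a nonzero polynomial, is invertible, so the invertible case applies there. Since the universal ring injects into its fraction field, the identity holds in the universal ring itself, and hence in every commutative ring $R$.

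\textbf{Alternative route.} One could instead expand $\det$ along the first row and then along the first column, identifying the cross terms $-u_i v_j$ times cofactors with the entries of $\adj(M)$ up to sign. This avoids the density argument, but the sign bookkeeping is messier.

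\textbf{Expected obstacle.} The only genuinely delicate step is justifying the passage from invertible $M$ to arbitrary $M$ over a general commutative ring, which may have zero divisors. The universal-ring argument handles this cleanly.
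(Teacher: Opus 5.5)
Your proof is correct, but it takes a different route from the paper. The paper's proof is the one-line remark that the identity ``follows directly by cofactor expansion along the first row.'' Carried out fully, that argument expands along the first row to get $\al\det(M)$ plus terms $\pm u_j \det N_j$, where $N_j$ has first column $v$. It then expands each $N_j$ along that column and collects the terms $\pm u_j v_k$ times the $(k,j)$-minor of $M$ into $-u\,\adj(M)\,v$. This is exactly your ``alternative route.''

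\textbf{Paper's route.} Cofactor expansion works verbatim over any commutative ring. It needs no invertibility assumption and no specialization step, at the cost of sign bookkeeping.

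\textbf{Your route.} Your Schur-complement factorization checks out by block multiplication, and it makes it transparent why the adjugate appears. However, it only applies when $\det(M)$ is a unit, so the universal-ring step is genuinely needed, not just a technicality. You handle it correctly:
\begin{enumerate}
\item $\mathbb{Z}[\al,u_i,v_j,M_{ij}]$ is a domain.
\item The generic $\det(M)$ is nonzero, so it is invertible in the fraction field.
\item $\det$ and $\adj$ are given by integer polynomials and so commute with ring homomorphisms. Hence the identity in the fraction field pulls back to the universal ring and then specializes to $R$.
\end{enumerate}

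The trade-off is between a direct, index-heavy computation valid in any ring and a conceptual factorization that relies on a standard density principle.
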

\begin{proof}
This is the standard identity and follows directly by cofactor expansion along the first row.
\end{proof}

\noindent Let $y \in Y$. As above let $W$ be a complement of $L_y$ in $V$.  By Lemma \ref{charY},  $y = \begin{pmatrix} \lam & r \\ 0 & B \end{pmatrix}$ for some $r \in W^*$ and $B \in \End(W)$. Let $z \in M_n(F)$. Then $z$ is of the form 
$
z := \begin{pmatrix} a & b \\ c & A \end{pmatrix}
$
where $a \in F, b \in W^*, c \in W$, and $A \in \End(W)$. Let $x = z - y = \begin{pmatrix} a  - \lam & b -r \\ c & A- B \end{pmatrix}$. With $S := A- B$, the  characteristic polynomial of $x$ is given by, according to Lemma \ref{schur}, 
\begin{align*}
\chi_x(T) &= \det(TI-x) = \det \begin{pmatrix} T - a + \lam & - (b-r) \\ -c & TI - S \end{pmatrix} \\
&= (T- a + \lam) \det(TI-S) - (b-r) \adj(TI-S)c.
\end{align*}
 Recall the notation \S $2$. For $c \in W$, let 
$$
U_B(T) := \adj(TI-S)c
$$
so that $U_B(T) \in W[T]$. Let $Q_B(T) := \det(TI-S)$. Then $\chi_x(T) = (T - a + \lam)Q_B(T) - b U_B(T) + r U_B(T)$. Therefore the condition $\chi_x(T) = \chi_X(T)$ is equivalent to 
$$
rU_B(T) = \chi_X(T) - (T - a + \lam) Q_B(T)  + b U_B(T).
$$
Define
$$
R_B(T) := \chi_X(T)  - (T - a + \lam) Q_B(T) + b U_B(T).
$$
Then the equation becomes $rU_B(T) = R_B(T)$. We now claim that $R_B(T)$ is a polynomial of degree at most $m-1$.

\begin{lem}\label{R_B(T)_degree}
The degree of $R_B(T)$ is at most $m-1$ (= $n-2$).
\end{lem}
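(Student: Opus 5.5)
We will show that the coefficients of $T^n$ and $T^{n-1}$ (i.e. $T^{m+1}$ and $T^m$) in $R_B(T) = \chi_X(T) - (T - a + \lam)Q_B(T) + bU_B(T)$ vanish. Since each of the three summands has degree at most $m+1$, this gives $\deg R_B \le m-1$. The plan is to compare leading coefficients term by term, using only the degree structure of the adjugate and the trace-matching hypothesis.

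\emph{Degrees of the summands.} Here $\chi_X(T)$ is monic of degree $n = m+1$. Also $Q_B(T) = \det(T\id - S)$ is monic of degree $m$, so $(T - a + \lam)Q_B(T)$ is monic of degree $m+1$, and the $T^{m+1}$ terms cancel. For $U_B(T) = \adj(T\id - S)c$, the entries of $\adj(T\id - S)$ are $(m-1)\times(m-1)$ minors of $T\id - S$. Hence $\adj(T\id - S) = \id\, T^{m-1} + (\text{lower order in } T)$ as an element of $\End_F(W)[T]$; this is the standard fact, which also follows from the identity $(T\id - S)\adj(T\id - S) = Q_B(T)\id$ by comparing top coefficients. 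Therefore $U_B(T) = cT^{m-1} + \cdots \in W[T]$, and $bU_B(T)$ has degree at most $m-1$. So $bU_B$ contributes nothing to the coefficients of $T^{m+1}$ or $T^m$.

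\emph{The $T^m$ coefficient.} It remains to check the coefficient of $T^m$:
\begin{itemize}
\item in $\chi_X(T)$ it is $-\tr(X)$;
\item in $(T - a + \lam)Q_B(T)$ it is $-\tr(S) + (\lam - a)$, where $\tr(S) = \tr(A) - \tr(B)$.
\end{itemize}
So the $T^m$ coefficient of $R_B$ equals
$$-\tr(X) + \tr(A) - \tr(B) + a - \lam.$$
Now $\tr(z) = a + \tr(A)$, and $\tr(Y) = \lam + \tr(B)$ because $\chi_B = h$ for $y \in Y$ (see Lemma \ref{charY}). The matching-trace hypothesis $\tr(z) = \tr(X) + \tr(Y)$ then makes this coefficient $0$.

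The only mildly delicate point is justifying that $\adj(T\id - S)$ has degree exactly $m-1$ with leading coefficient $\id$ in the coordinate-free setting of \S 2. I would handle this by choosing any basis of $W$ temporarily, or by the adjugate identity above. The rest is bookkeeping. If $m = 1$, then $\adj$ is the constant $1$ and the same argument goes through.
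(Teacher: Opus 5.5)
Your proposal is correct and follows the paper's proof essentially step for step. You cancel the monic degree-$(m+1)$ terms, then use $\tr(z)=a+\tr(A)$, $\tr(Y)=\lam+\tr(B)$ and the matching-trace hypothesis to kill the $T^m$ coefficient, and finally observe that $bU_B(T)$ has degree at most $m-1$; your justification of that last point via the leading coefficient $\id\, T^{m-1}$ of $\adj(T\id - S)$ is a detail the paper simply asserts (it is implicit in the adjugate expansion used in Lemma \ref{cyclic_vector_lemma}).
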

\begin{proof}
Since the degree of $\chi_X(T)$ is $n = m +1$ and the degree of $(T- a + \lam)Q_B(T)$ is also $n = m+1$, and since both are monic polynomials,  the leading terms cancel. We must check the coefficient of $T^m$.  Let $\chi_X(T) = T^{m+1} - \tr(X) T^m + \ldots$. Since $Q_B(T) = \det(TI- S) = \det(TI- (A-B))$ we have $Q_B(T) = T^m  - \tr(S) T^{m-1}+ \ldots$. Therefore $(T - a + \lam) Q_B(T)$ has $T^m$-coefficient $- \tr(S) - (a- \lam)$. But $S = A - B$, so $\tr(S)= \tr(A)- \tr(B)$. Also, $\tr(z) = a + \tr(A)$ and $\tr(Y) = \lam + \tr(B)$. Thus
$$
(a- \lam	) + \tr(S) = a - \lam + \tr(A)- \tr(B) = \tr(z) - \tr(Y).
$$
By hypothesis, $\tr(z) = \tr(X) + \tr(Y)$, so $(a- \lam) + \tr(S) = \tr(X)$. Therefore the coefficient of $T^m$ in $(T- a + \lam) Q_B(T)$ is $- \tr(X)$ which is exactly the coefficient of $T^m$ in $\chi_X(T)$. So the degree $n = m+1$ and degree $m$ terms cancel in $\chi_X(T) - (T - a + \lam) Q_B(T)$. Since $b U_B(T)$ has degree at most $m-1$, it follows that $R_B(T) \in F[T]_{\leq m-1}$.
\end{proof}
Thus the equation $r U_B(T) = R_B(T)$ is an equation inside the $m$-dimensional $F$-vector space $F[T]_{\leq m-1}$, the vector space of polynomials of degree at most $m-1$ over $F$.

\begin{lem}\label{cyclic_vector_lemma}
Let $W$ be a subspace of $V$ of dimension $m$, and $S \in \End(W)$, and $c \in W$. Define
$$
\Phi_{S,c} : W^* \to F[T]_{\leq m-1}
$$
by $\Phi_{S,c}(r) = r(\adj(TI-S)c)$; since the degree of $\adj(TI-S)c$ is at most $m-1$, this map is well-defined.
Then $\Phi_{S,c}$ is an isomorphism if and only if $c$ is a cyclic vector for $S$, i.e. $\Set{c, Sc, S^2 c, \ldots, S^{m-1}c}$ forms a basis of $W$.
\end{lem}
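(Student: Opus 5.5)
Both spaces have dimension $m$ over $F$, so $\Phi_{S,c}$ is an isomorphism if and only if it is injective; equivalently, the vectors $U_0,\dots,U_{m-1}\in W$ defined by $\adj(TI-S)c=\sum_{j=0}^{m-1}U_jT^j$ span $W$. Indeed, $\Phi_{S,c}(r)=\sum_j r(U_j)T^j$, so $\Phi_{S,c}(r)=0$ exactly when $r$ kills every $U_j$. Such a nonzero $r$ exists if and only if the $U_j$ fail to span $W$. The proof therefore reduces to showing that $\operatorname{span}\{U_0,\dots,U_{m-1}\}$ equals the cyclic subspace $Z(S,c):=\operatorname{span}\{c,Sc,\dots,S^{m-1}c\}$.

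To compute the $U_j$, write $\chi_S(T)=\det(TI-S)=T^m+s_{m-1}T^{m-1}+\dots+s_0$. I will use the standard expansion of the adjugate of $TI-S$:
$$\adj(TI-S)=\sum_{j=0}^{m-1}T^j\,P_j(S),\qquad P_j(S)=S^{m-1-j}+s_{m-1}S^{m-2-j}+\dots+s_{j+1}I.$$
This follows by comparing coefficients in $(TI-S)\adj(TI-S)=\chi_S(T)I$ over $\End_F(W)[T]$. Uniqueness holds because $TI-S$ is a non-zero-divisor: its determinant is monic. One can also verify directly that the product telescopes, using Cayley--Hamilton. Hence
$$U_j=P_j(S)c=S^{m-1-j}c+\text{(a linear combination of } S^{k}c,\ k<m-1-j).$$

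The transition matrix from $(c,Sc,\dots,S^{m-1}c)$ to $(U_{m-1},U_{m-2},\dots,U_0)$ is therefore unitriangular. In particular $U_{m-1}=c$, $U_{m-2}=Sc+s_{m-1}c$, and so on. So the two families span the same subspace, and one is linearly independent exactly when the other is. Thus $\Phi_{S,c}$ is injective if and only if $c,Sc,\dots,S^{m-1}c$ are linearly independent, that is, form a basis of $W$, which is precisely the statement that $c$ is cyclic for $S$.

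The only step needing care is justifying the adjugate formula with the paper's conventions for $\End_F(W)[T]$; everything else is bookkeeping. In this step I would verify $(TI-S)\sum_j T^jP_j(S)=\chi_S(T)I$ directly, which uses Cayley--Hamilton only for the constant term, and then cancel $TI-S$ by multiplying on the left by $\adj(TI-S)$.
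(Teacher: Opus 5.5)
Your proposal is correct and follows essentially the same route as the paper: expand $\adj(TI-S)$ via the standard identity, observe that the coefficient vectors of $\adj(TI-S)c$ are a unitriangular transform of $c, Sc, \ldots, S^{m-1}c$, and conclude that $\Phi_{S,c}$ is an isomorphism exactly when these coefficients form a basis of $W$. The only differences are cosmetic. You justify the adjugate expansion (via Cayley--Hamilton and cancelling the monic $\det(TI-S)$), which the paper merely cites. You also phrase the final step through injectivity and annihilators rather than through a dual basis.
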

\begin{proof}
Let 
$$
Q(T) := \det(T I -S) = T^m +q_1 T^{m-1} + q_2 T^{m-2} + \ldots + q_m.
$$
Recall the standard identity 
\begin{align*}
\trm{adj}(TI-S) &= T^{m-1}I + T^{m-2}(S + q_1 I) + T^{m-3}(S^2 + q_1 S + q_2I) + \ldots + \\
& (S^{m-1} + q_1 S^{m-2} + \ldots + q_{m-1}I).
\end{align*}
Then 
$$
\trm{adj}(TI-S)c = T^{m-1}v_0 + T^{m-2}v_1 + \ldots + v_{m-1},
$$
where $v_0 = c$, $v_1 = Sc + q_1 c$, $v_2 = S^2 c + q_1 Sc + q_2 c$, and in general 
$$
v_j = S^j c + \trm{ a  linear combination of } c, Sc, \ldots, S^{j-1}c.
$$
Since $v_0, v_1,\ldots, v_{m-1}$ is obtained from the list $c, Sc, \ldots, S^{m-1}c$ by an upper triangular change of basis with diagonal entries $1$, it follows that $v_0, \ldots, v_{m-1}$ form a basis of $W$ if and only if $c, Sc, \ldots, S^{m-1}c$ form a basis of $W$.

Now 
\begin{align*}
\Phi_{S,c}(r) &= r ( \adj(TI-S)c ) = r \left( T^{m-1}v_0 + \ldots + v_{m-1} \right) \\
&= r(v_0) T^{m-1} + r(v_1) T^{m-2} + \ldots + r(v_{m-1}).
\end{align*}
The map 
$$
r \mapsto (r(v_0), \ldots, r(v_{m-1}))
$$
is an isomorphism of $W^*$ to $F^m$ if and only if $v_0, \ldots, v_{m-1}$ form a basis of $W$. Indeed, if the map is an isomorphism and $\sum_{i = 0}^{m-1} a_i v_i = 0$ then choosing $r_i$ such that $r_i(v_j) = \delta_{ij}$ we see that each $a_i = 0$ , hence $v_0, \ldots, v_{m-1}$ are linearly independent, and since they are $m$ in number, they form a basis of $W$. Conversely, if $v_0, \ldots, v_{m-1}$ form a basis of $W$, then choosing a basis of $W^*$ dual to $v_0, \ldots, v_{m-1}$ we see that the standard basis vectors of $F^m$ are contained in the image, so that the map is an isomorphism. The result follows.
\end{proof}
Now we come to the crucial result of this section. For $B \in \CalC_h$ and $r \in W^*$, set $y_{B,r} = \begin{pmatrix} \lam & r \\ 0 & B \end{pmatrix}$. By Lemma \ref{charY}, $y_{B,r} \in Y$, and its $\lam$-eigenline is $L$.
\begin{prop}\label{cyclic_noncyclic}
Let $z$ be as above. For every $B \in \CalC_h$, the following holds:
$$
\#\Set{r \in W^* : z - y_{B,r} \in X} = \begin{cases} 1, & \trm{ if } c \trm{ is cyclic for } A-B, \\
0, & \trm{ if } c \trm{ is not cyclic for } A - B.
\end{cases}
$$
\end{prop}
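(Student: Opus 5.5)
The plan is to translate membership $z - y_{B,r} \in X$ into the linear equation $r\,U_B(T) = R_B(T)$ in $F[T]_{\leq m-1}$. Then I split according to whether $\Phi_{S,c}$ is an isomorphism, where $S = A - B$.

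First, by Lemma \ref{conjugacy_characteristic_equivalence}, $z - y_{B,r} \in X$ if and only if $\chi_{z-y_{B,r}}(T) = \chi_X(T)$. The Schur-complement computation preceding Lemma \ref{R_B(T)_degree} (using Lemma \ref{schur}) shows that this holds if and only if $\Phi_{S,c}(r) = r\,U_B(T) = R_B(T)$. Note that $R_B(T)$ depends only on $B$ (and on $z$ and $X$), not on $r$. By Lemma \ref{R_B(T)_degree}, $R_B(T) \in F[T]_{\leq m-1}$, so the condition is the single equation $\Phi_{S,c}(r) = R_B$ in the $m$-dimensional space $F[T]_{\leq m-1}$.

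\emph{Cyclic case.} If $c$ is cyclic for $S = A-B$, then $\Phi_{S,c}$ is a linear isomorphism $W^* \to F[T]_{\leq m-1}$ by Lemma \ref{cyclic_vector_lemma}. Hence there is exactly one $r$ with $\Phi_{S,c}(r) = R_B$, and the count is $1$.

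\emph{Non-cyclic case.} This is the main step: I must show that no $r$ works, even though the linear equation could a priori be solvable. I will argue structurally rather than by linear algebra. Let $W_c = \operatorname{span}\{c, Sc, S^2c, \ldots\}$ be the $S$-cyclic subspace of $W$ generated by $c$; since $c$ is not cyclic, $W_c \subsetneq W$. Set $U := L_y \oplus W_c$, where $L_y = \langle e\rangle$.

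Write $x = z - y_{B,r} = \begin{pmatrix} a-\lambda & b - r \\ c & S\end{pmatrix}$. For $w \in W_c$, the $W$-component of $x(w)$ is $Sw \in W_c$, and its $e$-component lies in $L_y$. Also $x(e) = (a-\lambda)e + c$ with $c \in W_c$. Hence $U$ is $x$-invariant.

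It remains to check that $U$ is a proper nonzero subspace. It is nonzero because it contains $e$, and $\dim U = 1 + \dim W_c < 1 + m = n$. So $U$ is a proper nonzero $x$-invariant subspace of $V$. By the final assertion of Lemma \ref{conjugacy_characteristic_equivalence}, $x \notin X$. This holds for every $r \in W^*$, so the count is $0$.

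The only subtlety is the case $c = 0$. Then $W_c = 0$ and $U = L_y$, which is still proper when $n \geq 2$, so the argument goes through unchanged.
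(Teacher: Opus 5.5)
Your proof is correct and follows the paper's argument essentially verbatim. In the cyclic case, you use uniqueness of the solution of $rU_B(T)=R_B(T)$ via the isomorphism $\Phi_{S,c}$ of Lemma \ref{cyclic_vector_lemma}. In the non-cyclic case, you use the proper nonzero $x$-stable subspace $L_y\oplus\langle c, Sc, S^2c,\dots\rangle$, which contradicts the irreducibility in Lemma \ref{conjugacy_characteristic_equivalence}; your explicit check of $c=0$ is a harmless addition that the paper leaves implicit.
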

\begin{proof}

Fix $B \in \CalC_h$, and put $S := A -B$. We consider separately the cases in which $c$ is cyclic and noncyclic for $S$.  Suppose that $c$ is not cyclic for $S$. We claim that $z- y_{B,r} \not \in X$ for every $r \in W^*$.
 Let $x := \begin{pmatrix} a- \lam & b - r \\ c & S \end{pmatrix}$, where $S = A  - B$. Suppose $c$ is not cyclic for $S$. Define 
$$
U = \ip{c, Sc, S^2c, \ldots, S^{m-1}c} \sub W.
$$
Then $U$ is a proper $S$-stable subspace of $W$. Let $L := L_y = Fe$ and let $E = L \oplus U$, which is a nonzero proper subspace of $V$. We claim that $E$ is $x$-stable. Indeed, let $\al e+ u  \in  V$ with $\al \in F, u \in U$. Then, from the above matrix representation of $x$	, we see that  
$$
x(\al e + u) = ((a- \lam) \al + (b-r)(u))e + \al c + Su.
$$
Since $u \in U$ and $U$ is $S$-stable, $\al c + S u \in U$. Therefore $x (L \oplus U) \sub L \oplus U$, hence $E$ is a nonzero proper $x$-stable $F$-subspace of $V$, contrary to the fact that $x$ acts irreducibly on $V$; refer to the last statement of Lemma \ref{conjugacy_characteristic_equivalence}.

Suppose now that $c$ is cyclic for $S = A - B$. 
Since $c$ is cyclic for $A-B$, it follows from Lemma \ref{cyclic_vector_lemma} that the linear map $\Phi_B : W^* \to F[T]_{\leq m-1}$ given by $\Phi_B(r) = rU_B(T)$ is an isomorphism. The condition $x = z - y \in X$ is equivalent to $\chi_{z-y} (T) = \chi_X(T)$ which in turn is equivalent to $\Phi_B(r) = R_B(T)$. Indeed if $z - y \in X$ then by Lemma \ref{conjugacy_characteristic_equivalence}, we have $\chi_{z-y}(T) = \chi_X(T)$. On the other hand, by the computation before Lemma \ref{R_B(T)_degree}, it follows that the condition $\chi_{z-y}(T) = \chi_X(T)$ is equivalent to $r(U_B(T)) = R_B(T)$. But $r(U_B(T)) = \Phi_B(r)$, so if $\Phi_B$ is an isomorphism, then there exists a \tit{unique} $r$ such that $r(U_B(T)) = R_B(T)$, equivalently $z-y \in X$. If $\Phi_B$ is not an isomorphism, then $c$ is not a cyclic vector for $A-B$, hence $z-y \not \in X$, equivalently there exists no $r \in W^*$ such that $r(U_B(T)) = R_B(T)$, hence $\chi_{z-y}(T) \neq \chi_X(T)$, so that $z-y \not \in X$ by Lemma \ref{conjugacy_characteristic_equivalence}. 
Hence there exists exactly one $r \in W^*$ such that $z-y_{B,r} \in X$  when $c$ is cyclic for $A-B$, and there is no such $r$ when $c$ is noncyclic.
 
\end{proof}

\subsection{Uniform size of $\CalC_h$}

\begin{lem}\label{m_conj_count}
Let $Z \subset M_m(F)$ be a $\GL_m(F)$-conjugacy class whose characteristic polynomial is irreducible of degree $m$. Then
$$
\abs{\#Z - q^{m^2 - m}} \leq 2 q^{m^2 - m -1}.
$$
\end{lem}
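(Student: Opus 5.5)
We compute $\#Z$ exactly by orbit–stabilizer and then bound the resulting product. Since the characteristic polynomial $g$ of $Z$ is irreducible of degree $m$, Lemma \ref{conjugacy_characteristic_equivalence} (applied with $m$ in place of $n$) shows that $Z$ is a single $\GL_m(F)$-orbit. As in the introduction, the centralizer of any $z_0\in Z$ in $\GL_m(F)$ is $F[z_0]^\times \cong \F_{q^m}^\times$. Indeed, $F[z_0]\cong F[T]/(g)$ is a field, and since $z_0$ is cyclic, its centralizer in $M_m(F)$ is exactly $F[z_0]$. Hence
$$
\#Z=\frac{\#\GL_m(F)}{q^m-1}=q^{\frac{m(m-1)}{2}}\prod_{k=1}^{m-1}(q^k-1)=q^{m^2-m}\prod_{k=1}^{m-1}\left(1-q^{-k}\right).
$$
The claim therefore reduces to showing
$$
\left|\prod_{k=1}^{m-1}(1-q^{-k})-1\right|\le 2q^{-1}.
$$

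Set $P=\prod_{k=1}^{m-1}(1-q^{-k})$. Since each factor lies in $(0,1]$, we have $P\le 1$. For the lower bound we use the elementary inequality $\prod(1-a_k)\ge 1-\sum a_k$, valid for $a_k\in[0,1]$ and proved by induction on the number of factors. This gives
$$
P\ge 1-\sum_{k\ge1}q^{-k}=1-\frac{1}{q-1}.
$$
Because $q\ge 2$, we have $\frac{1}{q-1}\le \frac{2}{q}$, so $0\le 1-P\le 2q^{-1}$. Multiplying by $q^{m^2-m}$ yields the lemma. When $m=1$ the product is empty and $\#Z=1=q^0$, so the bound holds trivially.

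The only substantive step is identifying the centralizer, and it is already carried out in the introduction. The main remaining points are to check that the $\GL_m(F)$-orbit is the whole class $Z$, which Lemma \ref{conjugacy_characteristic_equivalence} gives, and to handle the case $q=2$, where $\frac{1}{q-1}=1=\frac{2}{q}$ sits exactly at the constant $2$ and the bound is sharp enough.
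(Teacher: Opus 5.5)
Your proposal is correct and follows essentially the same route as the paper: orbit--stabilizer with centralizer $\F_{q^m}^\times$ gives $\#Z = q^{m^2-m}\prod_{k=1}^{m-1}(1-q^{-k})$, and then $1-P\le \sum_{k\ge 1} q^{-k} = \frac{1}{q-1}\le \frac{2}{q}$. The only difference is that you also justify the centralizer identification and the case $m=1$, which the paper leaves implicit.
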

\begin{proof}
For $B \in Z$, the centralizer $C_{\GL_m(q)}$ of $B$ in $\GL_m(q)$ is isomorphic to the multiplicative group of the finite field with $q^m$ elements, so that its cardinality is $q^m-1$. Therefore 
$$
\#Z = \frac{\# \GL_m(q)}{q^m-1}= \frac{q^{m^2} \prod_{i = 1}^m (1- q^{-i})}{q^m-1} = q^{m^2 - m} \prod_{i =1}^{m-1} (1 - q^{-i})
$$
Now consider the product $P_m(q) := \prod_{i = 1}^{m-1} (1 - q^{-i})$. Since each $0 < 1- q^{-i} \leq 1$ we have $0 < P_m(q) \leq 1$. Let $a_i = q^{-i}$. Then from the inequality 
$$
1  - \prod_{i = 1}^{m-1} (1 - a_i) \leq \sum_{i = 1}^{m-1} a_i
$$
we get
$$
1- P_m(q) \leq \sum_{i = 1}^{m-1} q^{-i} \leq \frac{1}{q-1} \leq \frac{2}{q}.
$$
Thus 
$$\abs{\#Z - q^{m^2 - m}} = \abs{q^{m^2 -m}  P_m(q) - q^{m^2 - m} } \leq q^{m^2 -m} \abs{ 1 - P_m(q)} \leq q^{m^2-m} \cdot \frac{2}{q} = 2 q^{m^2-m-1}
$$
as desired.
\end{proof}

\subsection{A fixed-subspace counting lemma}

We use the following result of Ram \cite[Lemma $2.7$]{Ra} in the following lemma. A map $T : U \to W$ is said to be \tit{simple} if it has no nonzero $T$-invariant subspace contained in $U$. 
\begin{lem}\label{ram}
Let $f \in F[T]$ be a monic irreducible polynomial of degree $n$ and suppose $W \sub V$ is a $k$-dimensional subspace. If $T : W \to V$ is simple, then the number of extensions of $T$ to all of $V$ with characteristic polynomial $f$ is given by $\prod_{j = k+1}^{n-1} (q^n -q^j). $
\end{lem}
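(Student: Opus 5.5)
The plan is a direct count: parametrize every admissible extension by a basis of $V$ built from a cyclic vector. The key observation is that any $S \in \End_F(V)$ with irreducible characteristic polynomial $f$ of degree $n$ has no proper nonzero invariant subspaces (as in the last assertion of Lemma \ref{conjugacy_characteristic_equivalence}). Hence every nonzero vector is cyclic for $S$, and $S$ is determined by the images of a cyclic basis together with the relation $f(S)=0$.

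\textbf{Step 1: find a good starting vector.} Use the simplicity of $T$ to find $w \in W$ such that $w, Tw, \ldots, T^{k-1}w$ are defined, lie in $W$, form a basis of $W$, and satisfy $T^k w \notin W$. Define $W_0 = W$ and $W_{i+1} = W_i \cap T^{-1}(W_i)$. Each step lowers the dimension by at most one, since $T^{-1}(W_i)\cap W_i$ is cut out from $W_i$ by at most $\dim V - \dim W_i$ linear conditions. I would refine this to a drop of at most one at each stage by arguing via the maps $W_i \to V/W_{i-1}$, and I expect this to be the fiddliest point. If $W_{i+1} = W_i$ with $W_i \neq 0$, then $W_i$ is a nonzero $T$-invariant subspace of $W$, which contradicts simplicity. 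So the dimensions run $k, k-1, \ldots, 0$, and for nonzero $w \in W_{k-1}$ the iterates $u_i := T^i w$ are defined for $0 \le i \le k$, with $u_0,\dots,u_{k-1} \in W$.

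These iterates are independent. If some $u_j$ lay in the span of $u_0,\dots,u_{j-1}$ with $j \le k$, that span would be a nonzero $T$-invariant subspace of $W$, again contradicting simplicity. So $u_0,\dots,u_{k-1}$ is a basis of $W$. Moreover $u_k \notin W$: otherwise $u_0,\ldots,u_k$ would be $k+1$ vectors in the $k$-dimensional space $W$, and the same argument would produce an invariant subspace.

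\textbf{Step 2: the bijection.} An extension $S$ with $\chi_S = f = T^n + a_1T^{n-1}+\dots+a_n$ is forced to satisfy $S^i w = u_i$ for $i \le k$. Put $u_i := S^i w$ for $k < i \le n-1$. Because $w$ is cyclic for $S$, the vectors $u_0,\dots,u_{n-1}$ form a basis of $V$ extending the independent set $u_0,\dots,u_k$. Conversely, suppose $u_{k+1},\dots,u_{n-1}$ are any vectors making $u_0,\dots,u_{n-1}$ a basis. Define $S u_i = u_{i+1}$ for $i<n-1$ and $S u_{n-1} = -\sum_{i=0}^{n-1} a_{n-i} u_i$. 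Then $S$ is the companion matrix of $f$ in this basis, so $\chi_S = f$. Also $S$ agrees with $T$ on the basis $u_0,\dots,u_{k-1}$ of $W$, because $S u_{k-1} = u_k = T u_{k-1}$. These two constructions are mutually inverse, since $S$ is recovered from its values on a basis.

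\textbf{Step 3: count.} The number of choices of $u_{k+1},\ldots,u_{n-1}$ extending $u_0,\ldots,u_k$ to a basis is counted successively. The vector $u_{j}$ must avoid the span of the previous $j$ vectors, which gives $q^n - q^{j}$ choices for $j = k+1,\dots,n-1$. This yields $\prod_{j=k+1}^{n-1}(q^n-q^j)$.

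The main obstacle is Step 1, namely extracting a $T$-cyclic basis of $W$ from mere simplicity of a partially defined map. Once that is in hand, Steps 2 and 3 are routine bookkeeping with companion matrices.
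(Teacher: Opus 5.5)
The paper gives no proof of this lemma; it is quoted from Ram \cite{Ra}. Your self-contained argument has a genuine gap in Step 1. A simple partial map need not have a vector $w$ with $w,Tw,\dots,T^{k-1}w$ a basis of $W$.

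Take $n=4$, $k=2$, $W=\langle e_1,e_2\rangle$, $Te_1=e_3$, $Te_2=e_4$. Then $T$ is injective and $T(W)\cap W=0$, so $T$ is simple. But $W_1=W\cap T^{-1}(W)=0$, so the dimension drops from $2$ to $0$ at once. The lemma still holds here: writing extensions as conjugates of multiplication by a root $\theta$ of $f$ on $\mathbb{F}_{q^4}$, one finds exactly $q^4-q^3$ of them. Your bijection simply does not parametrize them.

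The refinement you have in mind can only show that the drops are non-increasing. Indeed, $T(W_i)\subseteq W_{i-1}$, so $W_{i+1}$ is the kernel of $W_i\to W_{i-1}/W_i$. The first drop, $k-\dim(W\cap T^{-1}W)=\dim(W+TW)-k$, is only bounded by $\min(k,n-k)$. It equals the number $s$ of chains $w^{(i)},Tw^{(i)},\dots,T^{\epsilon_i-1}w^{(i)}$ (with $\sum_i\epsilon_i=k$) into which $W$ decomposes; this is the Kronecker normal form of the pencil given by $W\hookrightarrow V$ and $T$. Your argument (Steps 1--3) is valid precisely when $s=1$, e.g.\ $k=1$ or $k=n-1$. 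Maps with $s\ge 2$ do occur in the paper's application, where $T=B|_U$ for $B\in\CalC_h$ and $U$ is an arbitrary $r$-dimensional subspace.

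To close the gap you need an argument that handles $s\ge 2$. Two routes are possible.

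\emph{Induction on $k$.} Use your case as a base: for $k=n-1$, $V/W$ is a line, so the first drop is at most one and your argument shows the extension is unique. Fix a flag $W=W^{(k)}\subset\dots\subset W^{(n-1)}\subset V$. Every extension restricts to a simple map on each $W^{(j)}$. So it would suffice to prove that a simple map on a $j$-dimensional subspace ($j\le n-2$) has exactly $q^n-q^{j+1}$ simple extensions to a given $(j+1)$-dimensional overspace, \emph{independently of its chain type}. This uniformity is the real content.

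\emph{Direct count.} Identify $V$ with $\mathbb{F}_{q^n}$ so that a fixed $S_0$ with characteristic polynomial $f$ is multiplication by a root $\theta$ of $f$. Then $(q^n-1)\cdot\#\{\text{extensions}\}=\#\{h\in\GL(V): hT=S_0h \text{ on } W\}$. Such an $h$ is determined on $W+TW$ by the images $p_i$ of the chain starts, and it extends to $V$ in $\prod_{j=k+s}^{n-1}(q^n-q^j)$ ways. You would then have to show that the number of tuples $(p_1,\dots,p_s)$ for which $\sum_i p_i\langle 1,\theta,\dots,\theta^{\epsilon_i}\rangle$ is direct of dimension $k+s$ equals $(q^n-1)\prod_{j=k+1}^{k+s-1}(q^n-q^j)$.

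Either way, an additional idea beyond your cyclic-vector bijection is required.
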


\begin{lem}
Let $A \in \End(W)$ and $U$ a nonzero proper subspace of $W$ of dimension $r$, so that $1 \leq r < m$. Define
$$
\CalF_{A,U} = \Set{ B \in \CalC_h : (A-B)U \sub U}
$$
Then $\# \CalF_{A,U} \leq q^{m^2-m-r(m-r)}$.
\end{lem}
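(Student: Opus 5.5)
Since $B\in\CalC_h$ acts irreducibly on $W$ (its characteristic polynomial $h$ is irreducible of degree $m$; last part of Lemma \ref{conjugacy_characteristic_equivalence}), $B$ has no nonzero proper invariant subspace. The condition $(A-B)U\subset U$ says $BU\subset AU+U$, but $AU+U$ need not equal $U$, so there is no immediate contradiction. Instead I would count via the restriction $B|_U : U\to W$ and use Lemma \ref{ram}.

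Step 1 (fix the restriction). Split $\CalF_{A,U}$ according to the linear map $\tau := B|_U : U \to W$. The constraint $(A-B)U\subset U$ is equivalent to $\tau(u) - A u \in U$ for all $u\in U$. That is, $\tau - A|_U$ is a linear map $U\to U$. So the admissible restrictions $\tau$ form the affine space $A|_U + \Hom(U,U)$, which has $q^{r^2}$ elements.

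Step 2 (extensions). For each admissible $\tau$, count the $B\in\CalC_h$ with $B|_U=\tau$. If $\tau$ is not simple, i.e. some nonzero subspace $U'\subset U$ has $\tau U'\subset U'$, then any extension $B$ would have a nonzero proper invariant subspace. This contradicts irreducibility of $h$, so there are no extensions. If $\tau$ is simple, Lemma \ref{ram} (applied with $n\to m$, $k\to r$, $f\to h$, ambient space $W$) gives exactly
\[
\prod_{j=r+1}^{m-1}(q^m-q^j)\le q^{m(m-1-r)}
\]
extensions.

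Step 3 (combine). Summing over the at most $q^{r^2}$ admissible $\tau$ gives
\[
\#\CalF_{A,U}\le q^{r^2+m(m-1-r)} = q^{m^2-m-r(m-r)},
\]
since $r^2 - mr = -r(m-r)$. This is exactly the claimed bound.

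The only delicate point is that Lemma \ref{ram} must be read inside $W$ (dimension $m$) rather than $V$. One also needs "extension" to mean an endomorphism of $W$ agreeing with $\tau$ on $U$, which matches how the restriction was parametrized. I expect no further obstacle beyond checking this bookkeeping and the exponent arithmetic.
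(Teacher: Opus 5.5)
Your proposal is correct and is essentially the paper's proof. The paper also parametrizes the admissible restrictions $B|_U$ as an affine space of size $q^{r^2}$ (phrased as lifts of $\pi\circ A|_U$ along the quotient $\pi: W\to W/U$, whereas you take $A|_U$ directly as base point), discards non-simple restrictions using irreducibility of $h$, and applies Ram's lemma inside $W$ to get at most $q^{r^2}\cdot q^{m(m-1-r)}=q^{m^2-m-r(m-r)}$.
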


\begin{proof}

Let $\pi: W \to W/U$ be the quotient map. The condition $(A-B)U \subset U$ is equivalent to $\pi \circ B|_U = \pi \circ A|_U$. Indeed, if for all $u \in U$, $(A-B)u \in U$ it follows that $Au = Bu + U$ in $W/U$ so that $\pi \circ B|_U = \pi \circ A|_U$, and conversely. 

Then $T = B|_U : U \to W$ is a lift of the fixed map $\tau : = \pi \circ A|_U : U \to W/U$. If $T_0$ is one lift of $\tau$, every lift is uniquely given by $T = T_0 + \iota \phi$, where $\phi \in \Hom(U,U)$ and $\iota : U \hookrightarrow W$. Indeed, suppose $T_0 : U \to W$ be one fixed lift, so that $\pi \circ T_0 = \tau$ also. Let $T: U \to W$ be a lift such that $\pi \circ T = \tau$. Then $\pi \circ (T - T_0) = 0$ so that for every $u \in U$, we have $(T - T_0)(u) \in \ker \pi = U$. Thus $T-T_0$ is actually a linear map $U \to U$, so that there is some $\phi \in \Hom(U,U)$ such that $T - T_0 = \iota \circ \phi$ where $\iota : U \hookrightarrow W$ is the natural inclusion. Conversely, if $\phi \in \Hom(U,U)$ and we define $T = T_0 + \iota  \phi$, then $\pi \circ T = \pi \circ T_0 + \pi \circ \iota \circ \phi = \pi \circ T_0$ since $\pi \circ \iota = 0$, so that every such $T$ is indeed a lift. As for the uniqueness, if $T_0 + \iota \phi_1 = T_0 + \iota \phi_2$, then $\iota (\phi_1  - \phi_2) = 0$, and since $\iota$ is injective $\phi_1 = \phi_2$. Since $\dim_{F} \End(U) = r^2$, it follows that the number of lifts is exactly $q^{r^2}.$

Now suppose such a $T$ is the restriction of some $B \in \CalC_h$. Then $T$ must be a \textit{simple} partial linear map. Indeed, suppose that $0 \neq U_0 \subseteq U$ be a nonzero subspace of $U$ fixed by $T$, i.e. $T(U_0) \subseteq U_0$. Since $T = B|_U$, it follows that $U_0$ is $B$-invariant, contrary to the hypothesis that $B$ has irreducible characteristic polynomial $h$. Thus only simple lifts $T$ can occur.  By Lemma \ref{ram}, the number of extensions $B \in \End(W)$ satisfying $B|_U = T$ with $\chi_B(T) = h$ is $\prod_{j =r +1}^{m-1} (q^m -q^j)$. Since there are at most $q^{r^2}$	possible simple lifts, 
$$
\#F_{A, U} \leq q^{r^2} \prod_{j=r+1}^{m-1}(q^m -q^j) \leq q^{r^2} q^{m(m-r-1)} = q^{m^2-m - r(m-r)}.
$$
\end{proof}

\noindent We need a uniform estimate for the Gaussian binomial coefficient: 
\begin{lem}\label{gauss_bc}
For integers $0 \leq b \leq a$, the Gaussian binomial coefficient satisfies
$$
\begin{bmatrix}a \\ b \end{bmatrix}_q \leq 8 q^{b(a-b)}.
$$
\end{lem}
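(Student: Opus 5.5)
We plan to bound the Gaussian binomial coefficient by writing it as a product of ratios and comparing each ratio with a power of $q$. Recall that
$$
\begin{bmatrix}a \\ b \end{bmatrix}_q = \prod_{i=1}^{b} \frac{q^{a-b+i}-1}{q^{i}-1}.
$$
First we dispose of the trivial cases: if $b = 0$ or $b = a$, the coefficient equals $1 \leq 8 = 8q^{0}$. By the symmetry $\begin{bmatrix}a \\ b \end{bmatrix}_q = \begin{bmatrix}a \\ a-b \end{bmatrix}_q$ we may also assume whatever ordering is convenient, though we do not expect to need it.

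The key step is to bound each factor separately. We use $q^{a-b+i}-1 \leq q^{a-b+i}$ in the numerator and $q^i - 1 = q^i(1-q^{-i})$ in the denominator. Each factor is then at most $q^{a-b}(1-q^{-i})^{-1}$, and multiplying over $i = 1, \ldots, b$ gives
$$
\begin{bmatrix}a \\ b \end{bmatrix}_q \leq q^{b(a-b)} \prod_{i=1}^{b} (1-q^{-i})^{-1} \leq q^{b(a-b)} \prod_{i=1}^{\infty}(1-q^{-i})^{-1}.
$$
It therefore suffices to show that $\prod_{i\geq 1}(1-q^{-i})^{-1} \leq 8$ for every prime power $q$. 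This infinite product is decreasing in $q$, so the worst case is $q=2$.

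The only real work is this numerical bound at $q=2$, and we handle it by splitting off the first few factors. The first two factors give $(1-\tfrac12)^{-1}(1-\tfrac14)^{-1} = 2 \cdot \tfrac43 = \tfrac83$. For the tail we use $-\log(1-x) \leq x/(1-x) \leq 2x$ for $0 < x \leq 1/2$. Applied with $x = 2^{-i}$ for $i \geq 3$, this gives a tail of at most $\exp\bigl(2\sum_{i\geq 3}2^{-i}\bigr) = e^{1/2} < 1.65$. Hence the whole product is at most $\tfrac83 \cdot 1.65 < 4.5 \leq 8$. (The true value is about $3.46$, so the constant $8$ leaves ample room.)

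For general $q \geq 2$, each factor $(1-q^{-i})^{-1}$ is at most the corresponding factor at $q=2$, so the same bound holds. Alternatively, one could use the cruder estimate $\prod_i(1-q^{-i}) \geq 1-\sum_i q^{-i} = 1 - \tfrac{1}{q-1}$, but this fails at $q=2$. That is why the explicit treatment of $q=2$ above is the step we regard as the main obstacle, although it is elementary.
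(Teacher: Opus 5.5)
Your proof is correct and follows essentially the same route as the paper: factor out $q^{b(a-b)}$, bound the remaining product by $\prod_{j\geq 1}(1-2^{-j})^{-1}$, and control that product with $-\log(1-x)\leq 2x$ for $0\le x\le 1/2$. The only difference is that the paper applies this inequality to every factor and gets $e^{2}<8$, whereas you evaluate the first two factors exactly and obtain the sharper bound $\tfrac{8}{3}e^{1/2}<4.5$.
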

\begin{proof}
Indeed, by definition, 
$$
\begin{bmatrix}a \\ b \end{bmatrix}_q = q^{b(a-b)} \prod_{i = 0}^{b-1} \frac{1-q^{-(a-i)}}{1- q^{-(b-i)}} \leq K q^{b(a-b)},
$$
where $K = \prod_{j =1}^{\infty} \frac{1}{1- 2^{-j}}$. The product $K$ converges, and since $- \log(1-x) \leq 2x$ for $0 \leq x \leq 1/2$, we have $\log K \leq 2 \sum_{j =1}^{\infty} 2^{-j} = 2$ so that $K \leq e^2 < 8$, and the lemma follows.
\end{proof}

\begin{prop}\label{nonzstable}
Fix $A \in \End(W)$ and $0 \neq c \in W$. Let 
$$
Z_{A,c} = \Set{B \in \CalC_h : c \trm{ is not cyclic for } A - B}.
$$
Then 
$$
\#Z_{A,c} \leq 16 q^{m^2 - m -1}.
$$
Thus $\#Z_{A,c} = O(q^{m^2 - m -1})$ with an absolute implied constant.

\end{prop}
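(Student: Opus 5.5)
If $c$ is not cyclic for $S := A-B$, then $U := \ip{c, Sc, \ldots, S^{m-1}c}$ is a nonzero proper $S$-stable subspace of $W$. It is nonzero because $c \neq 0$. So every $B \in Z_{A,c}$ lies in $\CalF_{A,U}$ for some subspace $U$ with $c \in U$ and $1 \leq \dim U = r \leq m-1$. This gives
$$
Z_{A,c} \subseteq \bigcup_{r=1}^{m-1} \bigcup_{\substack{U \ni c \\ \dim U = r}} \CalF_{A,U}.
$$
We then apply the bound $\#\CalF_{A,U} \leq q^{m^2-m-r(m-r)}$ from the preceding lemma.

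Next we count the subspaces. The $r$-dimensional subspaces of $W$ containing the fixed line $Fc$ correspond to the $(r-1)$-dimensional subspaces of $W/Fc$, which has dimension $m-1$. Their number is $\begin{bmatrix} m-1 \\ r-1 \end{bmatrix}_q$, and Lemma \ref{gauss_bc} bounds this by $8 q^{(r-1)(m-r)}$. Hence
$$
\#Z_{A,c} \leq \sum_{r=1}^{m-1} 8 q^{(r-1)(m-r)} q^{m^2-m-r(m-r)} = 8 q^{m^2-m} \sum_{r=1}^{m-1} q^{-(m-r)} = 8q^{m^2-m}\sum_{j=1}^{m-1} q^{-j}.
$$
The geometric sum is at most $\frac{1}{q-1} \leq \frac{2}{q}$. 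This yields $\#Z_{A,c} \leq 16 q^{m^2-m-1}$. When $m=1$ the sum is empty: every nonzero $c$ is cyclic, so $Z_{A,c} = \emptyset$ and the bound holds trivially.

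The main point to watch is that we must use only subspaces containing $c$. A naive sum over all $r$-dimensional $U$ would use the factor $q^{r(m-r)}$, which exactly cancels the saving in $\#\CalF_{A,U}$ and gives no decay in $q$. Restricting to $U \ni c$ supplies the extra factor $q^{-(m-r)} \leq q^{-1}$. Since $r \leq m-1$, every exponent $m-r$ is at least $1$, and this is what produces the $q^{-1}$ saving uniformly in $m$. The remaining steps, namely the constants from Lemma \ref{gauss_bc} and the geometric series, are routine.
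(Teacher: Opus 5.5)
Your proposal is correct and follows the paper's proof essentially line for line: you cover $Z_{A,c}$ by the sets $\CalF_{A,U}$ with $Fc \subseteq U \subsetneq W$, bound the number of such $U$ of dimension $r$ by $\begin{bmatrix} m-1 \\ r-1 \end{bmatrix}_q \leq 8q^{(r-1)(m-r)}$, and sum the resulting geometric series to get $16q^{m^2-m-1}$. Your explicit treatment of $m=1$ and your remark that restricting to subspaces containing $c$ is what produces the $q^{-1}$ saving are clarifications, not a different route.
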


\begin{proof}
Let $B \in Z_{A,c}$ and $S := A-B$. Since $c$ is not cyclic for $S$, its cyclic span $U_B = \ip{c, Sc, S^2c, \ldots}$ is a proper subspace of $W$. Moreover $c \in U_B$ and $S(U_B) \sub U_B$. Thus $(A-B) U_B \sub U_B$, so that 
$$	
Z_{A,c} \subseteq \bigcup_{F c \subseteq U  \subsetneq W} 
\CalF_{A, U}.
$$
Group the subspaces $U$ according to $r= \dim U$. The number of $r$-dimensional subspaces of $W$ containing the fixed line $Fc$ is given by the Gaussian binomial coefficient  $\begin{bmatrix}m-1 \\ r-1\end{bmatrix}_q$, which by Lemma \ref{gauss_bc} is at most $8 q^{(r-1)(m-r)}$ (with $a = m-1$ and $b = r-1$). Thus
\begin{align*}
\#Z_{A,c} &\leq \sum_{r = 1}^{m-1} \begin{bmatrix} m-1 \\ r-1 \end{bmatrix}_q	 q^{m^2-m - r(m-r)} \\
& \leq 8 q^{m^2-m} \sum_{r = 1}^{m-1} q^{(r-1)(m-r) - r(m-r)} \\
&= 8 q^{m^2 -m} \sum_{r = 1}^{m-1} q^{-(m-r)} \leq 8 \cdot q^{m^2 -m} \cdot \frac{2}{q} \\
&= 16 q^{m^2 - m -1}.
\end{align*}

\end{proof}

\begin{lem}\label{nonzstable_count}
Let $z$ be as above. Let $S_z$ be the set of $z$-stable lines. Then $\# S_z \leq 4 q^{n-2}$. 
\end{lem}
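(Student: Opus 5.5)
A $z$-stable line $L\subset V$ is a one-dimensional subspace with $zL\subseteq L$, i.e.\ $L$ is spanned by an eigenvector of $z$ with eigenvalue in $F$. So the plan is to bound the number of $F$-rational eigenlines of the nonscalar matrix $z$. We decompose according to the eigenvalues:
$$
\# S_z=\sum_{\mu\in F}\#\P\bigl(\Ker(z-\mu\id)\bigr)=\sum_{\mu\in F}\frac{q^{d_\mu}-1}{q-1},
$$
where $d_\mu:=\dim\Ker(z-\mu\id)$. Only eigenvalues contribute, so the sum is over the distinct $\mu\in F$ with $d_\mu\geq 1$.

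The eigenspaces for distinct eigenvalues are independent, so $\sum_\mu d_\mu\leq n$. Since $z$ is nonscalar, every $d_\mu\leq n-1$. The function $d\mapsto (q^d-1)/(q-1)$ is superadditive, because $q^{a+b}-1\geq (q^a-1)+(q^b-1)$. Hence, subject to the constraints $\sum d_\mu\le n$ and each $d_\mu\leq n-1$, the sum is maximized by one eigenspace of dimension $n-1$ together with one of dimension $1$. (When $n=2$ this means two lines.) This gives
$$
\# S_z\leq \frac{q^{n-1}-1}{q-1}+1 .
$$

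Finally we estimate crudely: $\frac{q^{n-1}-1}{q-1}=\sum_{i=0}^{n-2}q^i\leq q^{n-2}\cdot\frac{1}{1-1/q}\leq 2q^{n-2}$. Adding $1\leq q^{n-2}$ yields $\#S_z\leq 3q^{n-2}\leq 4q^{n-2}$.

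The only point needing care is the maximization step, where nonscalarity enters. A scalar $z$ would give $(q^n-1)/(q-1)\approx q^{n-1}$ stable lines, which is why the hypothesis is essential. I expect no serious obstacle; the superadditivity argument, or simply the bound $\sum_\mu q^{d_\mu-1}\cdot 2$ with $d_\mu\le n-1$ and at most $n$ terms handled via the convexity of $d\mapsto q^d$, settles it.
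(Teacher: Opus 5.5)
Your proof is correct and takes essentially the same route as the paper. Both decompose $S_z$ by the $F$-rational eigenvalues and use $d_\mu\le n-1$ and $\sum_\mu d_\mu\le n$. The paper bounds more crudely, via $\frac{q^d-1}{q-1}\le 2q^{d-1}$ and merging all but one eigenspace. You instead find the exact maximum $\frac{q^{n-1}-1}{q-1}+1\le 3q^{n-2}$.

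One small point needs filling in. Superadditivity of $g(d)=\frac{q^d-1}{q-1}$ only reduces you to two parts $(a,n-a)$. To finish the maximization, add the inequality $g(a)+g(b)\le g(a+b-1)+g(1)$ for $a,b\ge 1$, which is equivalent to $(q^{a-1}-1)(q^{b-1}-1)\ge 0$.
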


\begin{proof}
Let the distinct eigenvalues of $z$ lying in $F$ be $\al_1, \ldots, \al_t$. 
If $t = 0$ then $S_z = \emptyset$, and there is nothing to prove. Hence assume that $t \geq 1$. Let $d_i := \dim \ker(z - \al_iI)$. The $z$-stable $F$-lines are exactly the lines contained in these eigenspaces, so their number is 
$$
 \# S_z = \sum_{i = 1}^t \frac{q^{d_i -1}}{q-1}.
$$
Since $z$ is nonscalar, all $d_i \leq n-1$. Furthermore $\sum_i d_i \leq n$. Now $\frac{q^d-1}{q-1} \leq 2 q^{d-1}$. We claim that $\sum_i q^{d_i -1} \leq 2q^{n-2}$. Indeed, if $t = 1$, this follows immediately from $d_1 \leq n-1$. If $t \geq 2$, put $D_2 = d_2 + \ldots +d_t$. For positive $a,b$ one has the inequality 
$$
q^{a-1} + q^{b-1} \leq 2q^{a+b-2} \leq q^{a+b-1}
$$
Therefore $\sum_{i = 2}^t q^{d_i -1} \leq q^{D_2-1}$. Since $d_1 \leq n-1$, and $D_2 \leq n-1$, we have 
$$
\# S_z \leq 2 \sum_{i = 1}^{t} q^{d_i -1} \leq 2(q^{d_1 -1} + q^{D_2 -1}) \leq  4 q^{n-2}
$$
as desired.
\end{proof}

\subsection{Main Theorem}

We now prove the main result Theorem \ref{mainthm1}.

\begin{proof}
We already noted in the introduction, \S \ref{intro}, that $N_{X, Y}(z)= 0$ if $z$ is a scalar. So assume that $z$ is not a scalar. For $y \in Y$, let $L_y = \Ker(y - \lam I)$ be the (unique) $\lam$-eigenline of $y$. Consider the map 
\begin{align*}
\pi_z: \Set{ (x,y) \in X \times Y : x + y = z}  &\to \P(V)\\
(x,y) &\mapsto L_y
\end{align*}
Then
$$
\Set{ (x,y) \in X \times Y : x + y = z} = \bigsqcup_{L \in \P(V)} \pi_z^{-1}(L),
$$
so that  
$$
N_{X, Y}(z) = \sum_{L \in \P(V)} \# \pi_z^{-1}(L).
$$
If $L_y$ is $z$-stable, then $\pi_z^{-1}(L) = \emptyset$.  Indeed, if $L_y$ is $z$-stable then $c = 0$ so that $c$ is not a cyclic vector for $A-B$ hence by Proposition \ref{cyclic_noncyclic}, $z-y \not \in X$ for every $r \in W^*$, whence the fiber $\pi_z^{-1}(L)$ is empty. 

So in the summation for $N_{X, Y}(z)$ we may sum over lines $L$ which are not $z$-stable. For such lines, $c \neq 0$, and by Proposition \ref{cyclic_noncyclic},
\begin{align*}
\#\pi_z^{-1}(L) &= \# \Set{B \in \CalC_h : c \trm{ is cyclic for } A - B}\\
& = \# \CalC_h - \#Z_{A,c}
\end{align*}
where  $Z_{A,c}:= \Set{ B \in \CalC_h : c \trm{ is not cyclic for } A - B}.$
By Proposition \ref{nonzstable} and Lemma \ref{m_conj_count}, we see that 
\begin{align*}
\abs{\# \pi_z^{-1}(L)  - q^{m^2 -m}} &= \abs{ \# \CalC_h - \# Z_{A,c} - q^{m^2 -m}} \\
&\leq \abs{\#\CalC_h - q^{m^2-m}} + \#Z_{A,c} \\
& \leq 2 q^{m^2-m-1} + 16 q^{m^2 - m-1} = 18q^{m^2 - m -1}.
\end{align*}
\noindent The number of $F$-lines in $V$  is $\#\P(V) = \frac{q^n-1}{q-1} = q^{n-1} + q^{n-2} + \ldots +1$. The lower-order terms satisfy $q^{n-2} + \ldots + 1 \leq 2 q^{n-2}$. Therefore, if $G_z$ denotes the number of non-$z$-stable lines, 
$$
G_z = \frac{q^n -1 }{q-1} - \#S_z
$$
where $S_z$ is as in Lemma \ref{nonzstable_count}. Thus $\abs{G_z - q^{n-1}} \leq 6 q^{n-2}$. Furthermore, $G_z \leq \frac{q^n - 1}{q-1} \leq 2 q^{n-1}$.
 
From the above, for every non-$z$-stable line $L$, we have $\abs{\# \pi_z^{-1}(L)  - q^{m^2 -m}} \leq 18q^{m^2 - m -1}$, so that $\#\pi_z^{-1}(L) = q^{m^2 - m} + \eps_L$, where $\abs{\eps_L} \leq 18 q^{m^2 - m -1}$. Since $z$-stable lines contribute zero to the sum, we can rewrite 
$$
N_{X, Y}(z) = \sum_{L \trm{ nonstable }} (q^{m^2 - m} + \eps_L) = G_z q^{m^2 - m} + \sum_{L \trm{ nonstable }} \eps_L.
$$
Since $\abs{G_z - q^{n-1}} \leq 6 q^{n-2}$, it follows that 
$$
\abs{G_z q^{m^2 - m} - q^{n-1 + m^2 - m}} \leq 6 q^{n-2 + m^2 - m}.
$$
But $n-1 + m^2 - m = (n-1) + (n-1) (n-2) = (n-1)^2$, so that 
$$
\abs{G_z q^{m^2 - m} - q^{(n-1)^2}} \leq 6 q^{(n-1)^2 - 1}.
$$
For the second term
\begin{align*}
\big|\sum_{L \trm{ nonstable} } \eps_L \big| &\leq  G_z \cdot 18 q^{m^2 - m -1} \\
&\leq 2 q^{n-1}  \cdot 18 q^{m^2 - m -1} \\
&= 36 q^{(n-1)^2 -1}
\end{align*}
Combining these inequalities, we get the desired estimate.

\end{proof}

\begin{lem}\label{tracerealization}
Let $n \geq 2$, let $q$ be a prime power, and let $t \in F$. Assume $(q,n,t) \neq (2,2,1)$. Then there exist conjugacy classes $X, Y \sub M_n(F)$ such that $\chi_X(T)$ is irreducible of degree $n$, and $\chi_Y(T) = (T - \lambda) h(T)$, where $h(T)$ is irreducible of degree $n-1$ and $h(\lambda) \neq 0$; when $n = 2$, the two linear factors are distinct. Moreover, $\tr(X) + \tr(Y) = t$. In fact, the only genuine exceptional situation is when $q = n = 2$ with $t = 1$.
\end{lem}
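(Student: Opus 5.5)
Since conjugacy classes of the required types are determined by their characteristic polynomials (Lemma \ref{conjugacy_characteristic_equivalence}), the task reduces to a polynomial problem. We must find a monic irreducible $f$ of degree $n$, a $\lambda\in F$, and a monic irreducible $h$ of degree $n-1$ with $h(\lambda)\neq 0$, such that $a_f+\lambda+a_h=t$. Here $a_g$ denotes the trace of $g$, i.e.\ minus its $T^{\deg g-1}$-coefficient. Then $X$ and $Y$ are the classes of the companion matrices of $f$ and $(T-\lambda)h$. The key input is the classical fact that for $d\geq 1$ and any $c\in F$, there is a monic irreducible polynomial of degree $d$ over $F$ whose root has trace $c$. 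This holds for every $c$ when $d\geq 2$, by Hansen--Mullen / Carlitz: the number of such polynomials is roughly $q^{d-1}/d>0$. When $d=1$ it is trivial, the only such polynomial being $T-c$.

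\emph{Case $n\geq 3$.} Here $\deg h=n-1\geq 2$, so $h$ irreducible of degree $\geq 2$ automatically has no roots, and $h(\lambda)\neq 0$ is free. Choose any $\lambda$, for instance $\lambda=0$, and any irreducible $f$ of degree $n$ with trace $a_f$. Then choose an irreducible $h$ of degree $n-1$ with prescribed trace $t-a_f-\lambda$, using the prescribed-trace existence result. If I prefer to avoid citing Hansen--Mullen, a weaker fact suffices. The field $\mathbb{F}_{q^{n-1}}$ contains an element $\theta$ generating it over $F$ with $\mathrm{Tr}(\theta)$ equal to any given value. The elements of trace $c$ form an affine hyperplane of size $q^{n-2}$. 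The elements lying in proper subfields number at most $\sum_{d\mid n-1,\,d<n-1}q^d$, which is smaller than $q^{n-2}$ once $n-1\geq 2$ except possibly in tiny cases. Counting just this carefully is the main technical point.

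\emph{Case $n=2$.} We need $\lambda\neq\mu$ in $F$ and an irreducible quadratic $f$ with $a_f+\lambda+\mu=t$. The sums $\lambda+\mu$ with $\lambda\neq\mu$ cover all of $F$ when $q$ is odd, and cover $F^\times$ when $q$ is even, since $\lambda+\mu=0$ forces $\lambda=\mu$. For odd $q$, irreducible quadratics exist with every trace: $T^2-cT+d$ is irreducible for suitable $d$ because $c^2-4d$ runs over all of $F$. So we take any $f$ and solve for $\lambda+\mu$. For even $q$, $a_f\neq 0$ is forced, and every nonzero trace occurs, since $T^2+cT+d$ is irreducible iff $\mathrm{Tr}(d/c^2)=1$. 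So we need $c\in F^\times$ and $s\in F^\times$ with $c+s=t$. This is possible iff $F^\times$ contains two elements summing to $t$. That holds for $q\geq 4$, taking $c\notin\{0,t\}$. For $q=2$ it holds iff $t=0$, via $1+1$, which explains exactly the exception $(2,2,1)$.

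The only real obstacle is the prescribed-trace irreducible in degree $n-1\geq 2$. I would handle it by the subfield-counting argument above, verifying the small cases such as $n-1=2$, $q=2$ directly. If that bookkeeping gets messy, I would instead vary $\lambda$ over $F$, which only requires that the set of traces of irreducibles of degree $n-1$ be nonempty.
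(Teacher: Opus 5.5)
Your $n=2$ analysis is correct and is essentially the paper's argument. The paper uses $T^2-d$ with $d$ a nonsquare for odd $q$. For even $q>2$ it realizes every nonzero trace $a$ by $T^2+aT+a^2c$ with $c$ outside the Artin--Schreier image, which amounts to your criterion $\Tr_{F/\F_2}(d/c^2)=1$. The $q=2$, $t=1$ exception is identified the same way.

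For $n\ge 3$, however, the route you present first rests on a false claim. It is not true that every $c\in F$ is the trace of some monic irreducible polynomial of every degree $d\ge 2$. For $d=2$, $q$ even and $c=0$ there is none, since $T^2+e=(T+\sqrt{e})^2$ over $F$. This is the known exceptional case in Hansen--Mullen, and you use exactly this fact yourself when you note that $a_f\neq 0$ is forced for $n=2$ and even $q$. So with $\lambda=0$ fixed and $f$ arbitrary, your argument breaks for $n=3$, $q$ even, whenever $a_f=t$.

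The subfield count does not repair this. For $n-1=2$ your bound reads $q<q$, which fails for every $q$, not just in tiny cases. The exact count, namely the $a\in F$ with $2a=c$, shows the failure is real precisely when $q$ is even and $c=0$. For $n\ge 4$ the count does go through.

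The correct argument is the fallback in your last sentence, and it should be the main proof. Since $\deg h=n-1\ge 2$, $h$ has no roots in $F$, so $h(\lambda)\neq 0$ for every $\lambda\in F$. Take any irreducible $f$ of degree $n$ and any irreducible $h$ of degree $n-1$, and put $\lambda=t-a_f-a_h$. This needs only the existence of irreducible polynomials in each degree, with no prescribed-trace input at all. It is exactly the paper's proof.
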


\begin{proof}
First suppose that $n \geq 3$. Choose any monic irreducible polynomial $f(T) \in F[T]$ of degree $n$, and any monic irreducible polynomial $h(T) \in F[T]$ of degree $n-1$. Recall that the monic irreducible polynomials exist in every positive degree over $F$ \cite{CM}. 

Let $f(T) = T^n - a T^{n-1} + \ldots$ and $h(T) = T^{n-1} - b T^{n-2} + \ldots$, so that the matrix having characteristic polynomial $f$ has trace $a$, and a matrix having characteristic polynomial $h$ has trace $b$. 

Now define $\lambda := t - a - b$. Since $n \geq 3$, $\deg h = n-1 \geq 2$. Since $h$ is irreducible of degree at least $2$, it has no root in $F$, so that $h(\lambda) \neq 0$. Let $C_f$ be the companion matrix of $f$ and $C_h$ be the companion matrix of $h$. Set $X = \Set{g C_f g^{-1} : g \in \GL_n(F)}	 \sub M_n(F)$ be the conjugacy class of $C_f$ and $Y$ be the conjugacy class of $y_0 = \begin{pmatrix} \lambda & 0  \\ 0 & C_h \end{pmatrix}$. Then $\chi_X(T) = f$ is irreducible of degree $n$, while $\chi_Y(T) = (T- \lambda) h(T)$. Also $\Tr(X) = a$ and $\tr(Y) = \lambda + b$, so that $\tr(X) + \tr(Y) = a + \lam + b = t$.

Now we deal with the case when $n = 2$.  Now both factors of $Y$ are linear and we require them to be distinct. We deal with odd and even $q$ separately. 

\noindent \textbf{Case 1: $q$ odd, $n =2$: } Let $d \in F^{\times}$ be a nonsquare, so that $f(T) = T^2 -d $ is irreducible over $F$ with trace $0$. Thus, we need a split polynomial with distinct roots $\lam, \mu$ such that $\lam + \mu = t$. Choose any nonzero $\delta$, and put $\lam = \frac{t}{2} + \delta$ and $\mu = \frac{t}{2}- \delta$ so that their sum is $t$, and $\lam -\mu = 2 \delta \neq 0$ since $q$ is odd. Take $X$ be the conjugacy class with characteristic polynomial $T^2 - d$ and $Y$ be the conjugacy class of $\begin{pmatrix} \lam & 0 \\ 0 & \mu \end{pmatrix}$. Then $\tr(X) =0, \tr(Y) = \lam + \mu = t$ and their sum $\tr(X) + \tr(Y) = t$ as desired. 

\noindent \textbf{Case $2(i)$: $q > 2$ even, $n = 2$:} First we claim that for every $a \in F^{\times}$ there is an irreducible polynomial over $F$ having trace $a$. Indeed, the $\F_2$-linear map $L : F \to F$ defined by $L(u) = u^2 + u$ has kernel $\Set{0,1}$, because $u^2 +u = 0$ implies $u(u+1) = 0$.  Hence the cardinality of the image $\textrm{Im}(L)$ is $q/2 < q$. Let $c \not \in \textrm{Im}(L)$, and consider the polynomial $U^2 + U + c$, which has no root in $F$, so that it is irreducible.  

Now, for any $a \neq 0$, consider $f_a(T) = T^2 + aT + a^2 c$. If $x \in F$ were a root, dividing by $a^2$ and putting $u = x/a$ would give $u^2 + u + c = 0$ or $c = u^2 + u$ contrary to the choice of $c$. Hence $f_a$ is irreducible, and it has trace $a$; the reader may refer to Artin-Schreier kind of argument in \cite{Gl}.

Because $q > 2$, we can choose $a \in F^{\times}$ such that $a \neq t$ (with $t \in F$, the required trace). Choose an irreducible polynomial $f_a$ having trace $a$ as in the previous paragraph, and put $c_0 = t - a \neq 0$. Let $\mu =0$ and $\lam = c_0$. Then $\lam \neq \mu$ and $\lam + \mu = c_0 = t -a$. Choose $X$ to be conjugacy class with characteristic polynomial $f_a$ and $Y$ be the conjugacy class of $\begin{pmatrix} c_0 & 0 \\ 0 & 0 \end{pmatrix}$, so that $\tr(X) = a$ and $\tr(Y) = c_0 = t- a$ and their sum $\tr(X) + \tr(Y) = t$, proving the result in the case $q$ is even and $q > 2$.

\noindent \textbf{Case $2(ii)$: $q = 2$, $n = 2$:}
The only monic irreducible quadratic polynomial over $\F_2$ is $T^2 + T+ 1$. Thus, necessarily $\tr(X) = 1$. As for $Y$, there are only two distinct linear factors $T, T-1$ so that $\chi_Y(T)$ must necessarily be $T(T-1)$, whence $\tr(Y) = 1$. Consequently, $\tr(X) + \tr(Y) = 0$. Therefore if $t = 0$, the required pair $X, Y$  does  exist, otherwise when $t =1$, the pair does not exist. 
\end{proof}

Now we can prove Corollary \ref{cor2}
\begin{proof}
This follows from Lemma \ref{tracerealization} and Theorem \ref{mainthm1}.
\end{proof}

\bibliographystyle{elsarticle-num} 

\begin{thebibliography}{00}



\bibitem{CM} Sunil K. Chebolu, Jan Minac: Counting irreducible polynomials over finite fields using the inclusion-exclusion principle, \url{https://doi.org/10.48550/arXiv.1001.0409.} 

\bibitem{Gl} S.P. Glasby: Hilbert's Theorem $90$, periodicity, and roots of Artin-Schreier polynomials,  \url{https://doi.org/10.48550/arXiv.2505.00346}



\bibitem{Ki} K. Kishore: Matrix Waring Problem, Linear Algebra and its Applications, Vol. 646, (2022), 84--94. 

\bibitem{KS} K. Kishore, A. Singh: Matrix Waring problem II, Isr. J. Math. 267 (2025), 301–320.  

\bibitem{KVZ} K. Kishore, A. Vasiu, S. Zhan, Waring Problem for Matrices over Finite Fields, J. Pure and Appl. Alg. 228 (2024), 107656.

\bibitem{LST}  M.Larsen, A. Shalev, P.H. Tiep: The Waring problem for finite simple groups, Annals of Mathematics, Vol. 174, Issue 3, (2011), 1885--1950

\bibitem{LW} S. Lang, A. Weil: Number of points of varieties in finite fields: American Journal of Mathematics, Vol. 76, No. 4, (1954), 819--827.


\bibitem{Ra} S. Ram: Simple operators and $q$-Whittaker coefficients of power sum symmetric functions, \url{https://doi.org/10.48550/arXiv.2411.16485}


\bibitem{Sh} Shalev, Aner: Word maps, conjugacy classes, and a noncommutative Waring-type theorem, Ann. of Math. (2) 170 (2009), no. 3, 1383–– 1416.



\end{thebibliography}

\end{document}